\theoremstyle{plain}
\newtheorem{theorem}{Theorem}[section]
\newtheorem*{theo:sutmanwithnonisolatinggutsishoriprime}{Theorem~\ref{theo:sutmanwithnonisolatinggutsishoriprime}}
\newtheorem*{thm:handlenumberofgutsishandlenumberofmfld}{Theorem~\ref{thm:handlenumberofgutsishandlenumberofmfld}}
\newtheorem*{thm:bothbounds}{Theorem~\ref{thm:bothbounds}}
\newtheorem{cor}[theorem]{Corollary}
\newtheorem{prop}[theorem]{Proposition}
\newtheorem{lemma}[theorem]{Lemma}
\newtheorem{claim}[theorem]{Claim}
\theoremstyle{definition}
\newtheorem{defn}[theorem]{Definition}
\newtheorem{remark}[theorem]{Remark}
\newtheorem{example}[theorem]{Example}
\newcommand{\comment}[1]{}
\newcommand{\bdry}{\ensuremath{\partial}}
\newcommand{\nbhd}{\ensuremath{\mathcal{N}}}
\newcommand{\TN}{{\ensuremath{\rm{TN}}}}
\newcommand{\cut}{\ensuremath{\backslash}}
\newcommand{\ccut}{\ensuremath{\backslash\!\!\backslash}}
\newcommand{\dcsum}{\ensuremath{\mathbin{\rotatebox[origin=c]{-30}{$\asymp$}}}} 
\definecolor{amaranth}{rgb}{0.9, 0.17, 0.31} 
\definecolor{carrotorange}{rgb}{0.93, 0.57, 0.13} 
\definecolor{citrine}{rgb}{0.89, 0.82, 0.04} 
\definecolor{dartmouthgreen}{rgb}{0.05, 0.5, 0.06} 
\definecolor{ballblue}{rgb}{0.13, 0.67, 0.8} 
\definecolor{ceruleanblue}{rgb}{0.16, 0.32, 0.75} 
\definecolor{amethyst}{rgb}{0.6, 0.4, 0.8} 
\definecolor{amber}{rgb}{1.0, 0.75, 0.0} 
\definecolor{burlywood}{rgb}{0.87, 0.72, 0.53} 
\newcommand{\ken}[1]{{\color{amaranth} #1}}
\newcommand{\fab}[1]{{\color{ceruleanblue} #1}}
\newcommand{\change}[1]{{\textcolor{dartmouthgreen}{{#1}}}}
\title{Handle numbers of guts of sutured manifolds and nearly fibered knots.}
\author{Kenneth L. Baker}
\address{Department of Mathematics, University of Miami, Coral Gables, FL 33146, USA}
\email{k.baker@math.miami.edu}
\author{Fabiola Manjarrez-Guti\'errez}
\address{Instituto de Matem\'aticas, Universidad Nacional Aut\'onoma de Mexico, Cuernavaca, Mor., MEXICO}
\email{fabiola.manjarrez@im.unam.mx}
\date{May 2024}
\subjclass[2020]{57K10, 57K35, 57K99}  
\keywords{sutured manifolds, handle number, Heegaard splittings, Morse-Novikov number, tunnel number, nearly fibered knot, guts of Seifert surfaces}
\begin{document}

\begin{abstract}
Extending Haken's Theorem to product annuli and disks for Heegaard splittings of sutured manifolds, we show that the handle number of an irreducible sutured manifold equals the handle number of its guts. 
We further show that reduced sutured manifolds with torus boundary contained in $S^3$ fall in to three types that generalize the three models of guts of knots that are nearly fibered in the instanton or Heegaard Floer sense. In conjunction with these results and another concerning uniqueness of incompressible Seifert surfaces, we show that while many nearly fibered knots have handle number $2$ and a unique incompressible Seifert surface, some have handle number $4$ and others have extra incompressible Seifert surfaces.  Examples of nearly fibered knots with non-isotopic incompressible Seifert surfaces are exhibited.

\end{abstract}

\maketitle

\tableofcontents







\section{Introduction}

The exterior of a Seifert surface $S$ for a link $L=\bdry S$ in a closed oriented $3$-manifold is naturally a sutured manifold $(M_S, \gamma_S)$, called the complementary sutured manifold to $S$.  Its associated reduced sutured manifold, or {\em guts}, is essentially the ``non-product'' part of $(M_S, \gamma_S)$.  We say these are guts of the Seifert surface $S$ too.  In particular, if the guts are empty, then $(M_S,\gamma_S)$ is a product sutured manifold, implying that $L$ is a fibered link with fiber $S$ and  $S$ is the only incompressible Seifert surface for $L$.

The handle number of a sutured manifold (without toroidal sutures) is the number of handles needed in a Heegaard splitting, and the handle number of a knot is the minimum handle number among the complementary sutured manifolds of its Seifert surfaces.
A fibered knot has handle number $0$, so a knot with handle number $2$ may be regarded as close to being fibered.

In the Instanton and Heegaard Floer theories, a knot is fibered if and only if the top grading has rank 1, whence a knot is said to be {\em nearly fibered} if the top grading has rank 2.
It follows from \cite{Juhasz-FloerHomSurDecomp} that nearly fibered knots have unique minimal genus Seifert surfaces.
Building on Baldwin-Sivek \cite{BaldwinSivek}, Li-Ye  \cite{Gutsofnearlyfiberedknots} showed that nearly fibered knots have 
guts of one of the following three models
\begin{itemize}
    \item  (M1) a solid torus with four longitudinal sutures,
    \item  (M2) a solid torus with two sutures each of winding number 2, or
    \item  (M3) a positive Trefoil exterior with two sutures of slope 2.
\end{itemize}
Furthermore each model is realized by some nearly fibered knot. 

Here we show that while many of these Floer nearly fibered knots have handle number $2$ and a unique {\em incompressible} Seifert surface, some have handle number $4$ and others have extra incompressible Seifert surfaces. We say that a link has a sutured manifold as guts if it has a Seifert surface with those guts.

\begin{theorem}\label{thm:stuructreofnearlyfibered}
    Let $K$ be a nearly fibered knot in $S^3$. 
    If $K$ has guts of the first two models then $h(K)=2$,  otherwise $K$ has guts of the third model and $h(K)=4$.
    Moreover, $K$ has an incompressible Seifert surface of non-minimal genus if and only if the guts are of the first model and isolating. 
\end{theorem}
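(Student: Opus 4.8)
The plan is to combine the Li--Ye classification of the guts of nearly fibered knots with the three structural theorems of this paper. By Li--Ye, $K$ has a unique minimal genus Seifert surface $S$ whose guts form one of the models (M1), (M2), (M3), each connected with torus boundary. The first reduction is to pass from the handle number of the complementary sutured manifold $(M_S,\gamma_S)$ to that of its guts: Theorem~\ref{thm:handlenumberofgutsishandlenumberofmfld} gives that these agree, since the product part contributes no handles. Thus the computation of $h(K)$ localizes to computing the handle number of each model and to ruling out that some other Seifert surface does better.

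For the handle number computation I would treat the three models directly. For (M1), a solid torus with four longitudinal sutures, and (M2), a solid torus with two sutures of winding number $2$, I would exhibit an explicit sutured Heegaard splitting using a single $1$-handle and a single $2$-handle, giving the upper bound $h\le 2$; the matching lower bound $h\ge 2$ follows since neither is a product sutured manifold, so by Theorem~\ref{thm:bothbounds} the handle number is even and positive. For (M3), the positive trefoil exterior with two sutures of slope $-2$, the upper bound $h\le 4$ comes from a splitting adapted to the genus-$2$ Heegaard structure of the trefoil exterior, while the lower bound $h\ge 4$ is the crux: here I would invoke the lower-bound half of Theorem~\ref{thm:bothbounds}, using that the trefoil exterior is not a solid torus (its fundamental group is nonabelian) to preclude a handle number of $2$.

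With the model handle numbers $2,2,4$ in hand, I would finish the handle number statement by controlling all other Seifert surfaces. When the guts are non-isolating---which I would verify holds automatically for (M2) and (M3), and for (M1) in the non-isolating case---Theorem~\ref{theo:sutmanwithnonisolatinggutsishoriprime} makes $(M_S,\gamma_S)$ hori-prime, so $S$ is the unique incompressible Seifert surface and $h(K)$ is exactly the handle number of its guts. When the guts are (M1) and isolating there are further incompressible Seifert surfaces, but the guts of each should again reduce, via the same theorem, to a product together with an (M1) solid torus, so the minimum handle number over all Seifert surfaces remains $2$. This yields $h(K)=2$ for models (M1) and (M2), and $h(K)=4$ for (M3).

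For the ``moreover'' clause I would argue both directions through the isolating condition. If the guts are non-isolating, then hori-primeness forces $S$ to be the unique incompressible Seifert surface, so no incompressible surface of non-minimal genus exists; and since (M2) and (M3) guts are never isolating, a non-minimal genus incompressible surface can occur only when the guts are (M1) and isolating. Conversely, assuming the guts are (M1) and isolating, I would build the extra surface explicitly: the isolating annulus separating the solid-torus guts from the remainder lets one re-embed the Seifert surface by tubing it through the guts, raising its genus by one while preserving incompressibility and producing a non-minimal genus incompressible Seifert surface. The main obstacle I anticipate is the sharp lower bound $h\ge 4$ for (M3) and the verification that the re-embedded surface in the isolating (M1) case is genuinely incompressible and not isotopic back to $S$.
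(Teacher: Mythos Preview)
Your overall architecture matches the paper's: localize to the guts via Theorem~\ref{thm:handlenumberofgutsishandlenumberofmfld}, compute the handle number of each model, check non-isolation and incompressible horizontal primeness to get uniqueness via Theorem~\ref{theo:sutmanwithnonisolatinggutsishoriprime} / Corollary~\ref{cor:uniqueSeifert}, and build the extra surface by hand in the isolating (M1) case. Two substantive gaps remain.

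The serious one is your lower bound $h\ge 4$ for model (M3). Saying ``the trefoil exterior is not a solid torus'' does \emph{not} preclude $h=2$: plenty of non--solid-torus knot exteriors with two annular sutures have handle number $2$ (indeed, the very same trefoil exterior with sutures of slope $5$ or $7$ has $h=2$, see Example~\ref{exa:torusknotext}). The paper's argument is different and sharper: by Lemma~\ref{lem:handletoposn}, a handle-number-$h$ splitting of $(G,\gamma_{\sigma,2})$ caps off to a genus-$h/2$ Heegaard splitting of the Dehn filling $G_\sigma$. For $\sigma=-2$ on the right-handed trefoil, $\Delta(\sigma,\rho)=|-2-6|=8\neq 1$, so by Moser's classification $G_\sigma$ is a small Seifert fibered space of Heegaard genus $2$, forcing $h\ge 4$. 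You need this Dehn-filling/Heegaard-genus mechanism (or an equivalent), not an appeal to the topology of $G$ alone.

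Two smaller points. First, to invoke Theorem~\ref{theo:sutmanwithnonisolatinggutsishoriprime} you must also check the guts are \emph{incompressibly horizontally prime}; for (M3) this amounts to the suture slope $-2$ not being a boundary slope of the trefoil (the only boundary slopes are $0$ and $6$), which you should state. Second, your detour analyzing ``the guts of each further incompressible Seifert surface'' in the isolating (M1) case is unnecessary and unjustified: since $h(G,\gamma_G)=2$, Theorem~\ref{thm:handlenumberofgutsishandlenumberofmfld} already gives $h(M_S,\gamma_S)=2$ regardless of isolation, and $K$ not being fibered gives $h(K)\ge 2$, so $h(K)=2$ without examining other surfaces.
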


Guts of a sutured manifold are {\em isolating} if they separate off a component of the product piece with a single connected suture; see Definition~\ref{defn:isolation}.

We prove Theorem~\ref{thm:stuructreofnearlyfibered} after discussing some of its ingredients.

\begin{theorem}\label{thm:nonisolatingandincomphpgutstoknothandlenumberanduniqueSeifertsfce}
    Let $F$ be an incompressible Seifert surface for the knot $K$ in $S^3$.  
    If $F$ has guts that are
    connected, incompressibly horizontally prime, and non-isolating, then $F$ is the unique incompressible Seifert surface of $K$ and the handle number of $K$ is the handle number of the guts.
\end{theorem}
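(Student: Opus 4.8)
The plan is to pass to the complementary sutured manifold $(M_F,\gamma_F)=E(K)\ccut F$ and to read off both conclusions from the two structural results on guts. In $(M_F,\gamma_F)$ the two copies of $F$ form $R_\pm$ and the annulus $\partial N(K)\ccut\partial F$ carries the suture $\gamma_F$. The guiding observation is that any Seifert surface $F'$ for $K$, once made disjoint from $F$, sits inside $M_F$ as a properly embedded horizontal surface, since $\partial F'$ is a longitude and therefore lies on the suture annulus. Thus ``uniqueness of the incompressible Seifert surface'' is exactly ``horizontal primeness of $(M_F,\gamma_F)$,'' and ``the handle number of $K$'' will be extracted from the handle number of $(M_F,\gamma_F)$.

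First I would make the correspondence precise. Given a second incompressible Seifert surface $F'$, push $\partial F'$ off $\partial F$ to a parallel longitude on $\partial N(K)$ and then remove all circles of $F\cap F'$ by the usual innermost-disk exchanges, which is legitimate because both surfaces are incompressible and $E(K)$ is irreducible; this leaves $F'$ disjoint from $F$ and hence properly embedded in $M_F$ as an incompressible horizontal surface. Because the guts of $F$ are connected, incompressibly horizontally prime, and non-isolating, Theorem~\ref{theo:sutmanwithnonisolatinggutsishoriprime} shows that $(M_F,\gamma_F)$ is itself incompressibly horizontally prime, so every such $F'$ is isotopic in $M_F$ to $R_+$. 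Regluing along $\gamma_F$ turns this into an isotopy of $F'$ to $F$ in $E(K)$. Applied without a genus restriction, the same statement rules out incompressible Seifert surfaces of non-minimal genus, since horizontal primeness forces every incompressible horizontal surface to be parallel to $R_\pm$.

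Next I would compute the handle number. Theorem~\ref{thm:handlenumberofgutsishandlenumberofmfld} gives $h(M_F,\gamma_F)=h(G)$ for the guts $G$, the point being that the product part of $(M_F,\gamma_F)$ carries no handles, so a sutured Heegaard splitting of $G$ extends across it and, conversely, a splitting of $M_F$ restricts to one of $G$ of no greater complexity. Since $h(K)=\min_S h(M_S,\gamma_S)$, the bound $h(K)\le h(M_F)=h(G)$ is immediate. For the reverse inequality I would take a Seifert surface $S$ realizing $h(K)$ and compress it to an incompressible surface, which by the uniqueness just established is isotopic to $F$; invoking that the handle number does not increase under such a compression yields $h(M_F)\le h(M_S)=h(K)$, and hence equality.

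I expect the main obstacle to be this last monotonicity: one must verify that compressing a Seifert surface $S$ along a disk alters $(M_S,\gamma_S)$---dually, that cutting along a compressing disk for $R_\pm$---only enlarges its product part, so that no sutured Heegaard splitting is forced to acquire extra handles and $h$ cannot increase as one passes from an arbitrary $S$ to the unique incompressible surface $F$. Granting this, together with Theorems~\ref{theo:sutmanwithnonisolatinggutsishoriprime} and~\ref{thm:handlenumberofgutsishandlenumberofmfld}, the remaining steps are the routine bookkeeping of the Seifert-surface/horizontal-surface dictionary.
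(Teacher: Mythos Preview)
Your approach is the paper's: deduce uniqueness from Theorem~\ref{theo:sutmanwithnonisolatinggutsishoriprime} (packaged there as Corollary~\ref{cor:uniqueSeifert}, which uses connectedness of the Kakimizu complex where you use innermost disks---either works) and deduce $h(M_F,\gamma_F)=h(G,\gamma_G)$ from Theorem~\ref{thm:handlenumberofgutsishandlenumberofmfld}.

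You correctly isolate a step the paper's two-line proof leaves implicit: passing from $h(M_F,\gamma_F)=h(G,\gamma_G)$ to $h(K)=h(G,\gamma_G)$ requires knowing that the minimum defining $h(K)$ is attained on an \emph{incompressible} Seifert surface, after which uniqueness of $F$ finishes the job. This is standard in the handle-number/Morse--Novikov literature (for instance via circular thin position as in \cite{CTPforKnots}: untelescope a minimal circular splitting to obtain incompressible thin levels, then amalgamate over one of them without adding handles). Your proposed direct monotonicity under a single compression is also viable, but the phrase ``only enlarges its product part'' understates what happens: compressing $S$ alters both $R_+$ and $R_-$ of the complementary sutured manifold simultaneously, and $M_{S'}$ is obtained by cutting $M_S$ along the compressed surface and regluing the old $R_\pm$, not by merely absorbing a product collar. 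The thin-position route is the cleaner way to close this gap.
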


The property {\em incompressibly horizontally prime} means that any incompressible oriented surface whose boundary is the sutures is contained in a collar of the boundary; see Definition~\ref{defn:horizontallyprime}.
\begin{proof}
This follows from Theorem~\ref{thm:handlenumberofgutsishandlenumberofmfld} and Corollary~\ref{cor:uniqueSeifert}, a consequence of Theorem~\ref{theo:sutmanwithnonisolatinggutsishoriprime}.
\end{proof}

At its core, this theorem is about pulling  the properties of handle number and incompressibly horizontally prime back to a sutured manifold from its guts. 

Extending Haken's Theorem in Theorem~\ref{thm:haken} to product annuli and product disks for Heegaard splittings of sutured manifolds, we show the handle number of guts always pulls back.
\begin{thm:handlenumberofgutsishandlenumberofmfld}
    Let $(M,\gamma)$ be an irreducible sutured manifold with non-empty guts $(G, \gamma_G)$. 
    Then $h(M,\gamma)= h(G,\gamma_G)$.
\end{thm:handlenumberofgutsishandlenumberofmfld}

Pulling back the property of being incompressibly horizontally prime is more subtle.
\begin{theo:sutmanwithnonisolatinggutsishoriprime}
    Let $(M,\gamma)$ be a sutured manifold  such that each component of $\bdry M$ contains an annular suture. If $(M,\gamma)$ has guts $(G,\gamma_G)$ that are connected, incompressibly horizontally prime, and non-isolating, then $(M,\gamma)$ is incompressibly horizontally prime.
\end{theo:sutmanwithnonisolatinggutsishoriprime}

The hypothesis that the guts are non-isolating is necessary.
Without it, Theorem~\ref{theo:sutmanwithnonisolatinggutsishoriprime} may fail as exhibited in Section~\ref{sec:exampleofnearlyfibredknotwithtypeI}. 
We further explicate the necessity of the guts being connected and incompressibly horizontally prime in  Corollary~\ref{cor:gutsNOTconnORNOTihp}.

Prompted by the observation that nearly fibered knots always have guts whose boundary is a torus,
we determine the nature of guts of incompressible Seifert surfaces whose boundary is a torus.  Generalizing the three models of guts of nearly fibered knots, let us define three types of sutured manifolds with torus boundary:
\begin{itemize}
    \item Type I, a solid torus with four longitudinal sutures,
    \item Type II, a solid torus with two sutures each of winding number $\geq 2$, and    
    \item Type III, an exterior of a non-trivial knot with two sutures whose slope is not the boundary slope of an essential annulus.
\end{itemize}

\begin{theorem}\label{thm:gutstructure}
Suppose the guts $(G, \gamma_G)$ of an incompressible Seifert surface of a knot in $S^3$ has torus boundary.  Then the guts are homeomorphic to a sutured manifold of Type I, Type II, or Type III.
Furthermore:
\begin{itemize}
    \item Type I guts are incompressibly horizontally prime, may be isolating, and have $h(G, \gamma_G) = 2$. If they are isolating, then the product part has two components: $P_1$ meeting just one suture of the guts and $P_3$ meeting the other three sutures. 

    \item Type II  guts  are incompressibly horizontally prime, non-isolating, and have $h(G, \gamma) = 2$.

    \item Type III guts are incompressibly horizontally prime if the slope of $\gamma_G$ is not a boundary slope of $G$. The guts  are non-isolating if the slope of $\gamma_G$ is not the meridional slope of $G$. Their handle number $h(G, \gamma_G)$ is bounded below by twice the Heegaard genus of the Dehn filling of $G$ along the slope of $\gamma_G$.
\end{itemize}

\end{theorem}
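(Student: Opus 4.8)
The plan is to determine the homeomorphism type of $G$ first, then the slope and number of its sutures, and finally to verify the four displayed properties one type at a time. Since the guts form a taut sutured manifold, $G$ is irreducible, and by hypothesis $\partial G$ is a single torus $T$. As $G$ is a codimension-zero submanifold of the knot exterior it embeds in $S^3$, so by Alexander's theorem $T$ bounds a solid torus on at least one side. If $T$ compresses into $G$, irreducibility forces $G$ to be that solid torus; otherwise $T$ is incompressible in $G$ and $G$ is the exterior of the core of the complementary solid torus, i.e.\ a nontrivial knot exterior. The sutures $\gamma_G$ are disjoint essential simple closed curves on $T$, hence mutually parallel of a single slope $s$ and even in number, say $2k$, with $R_+$ and $R_-$ each a union of $k$ parallel annuli of slope $s$.

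Next I assign the three types. In the solid-torus case a meridional slope is excluded, since a meridional suture would bound a compressing disk for $R(\gamma_G)$ and contradict tautness. When $s$ is longitudinal the two-suture configuration is the product $\text{annulus}\times I$, which has empty guts, so a longitudinal $G$ must carry more sutures and I will show it carries exactly four, giving Type~I; when $s$ has winding number $\ge 2$ the two-suture configuration is already non-product, giving Type~II; the knot-exterior case gives Type~III. The delicate point here is that, abstractly, a solid torus (or a knot exterior with $s$ not a boundary slope) carrying any even number of parallel sutures of a fixed non-meridional slope has \emph{no} essential product disk or product annulus: in a solid torus every incompressible annulus is boundary-parallel, and for a non-boundary slope in a knot exterior there is no essential annulus of that slope. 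Thus reducedness alone does not bound $2k$, and I must invoke the global structure. Here $G$ is assembled with a product sutured manifold $P$ to recover the complementary sutured manifold $(M_S,\gamma_S)$ of a Seifert surface, whose positive region $R_+(M_S)\cong S$ is connected with the single boundary circle $\partial S=K$. Tracking how the $k$ annuli of $R_+(G)$ are joined through $R_+(P)$, and using that the maximal product part has been removed, is meant to force $k=1$ away from the longitudinal case and $k=2$ in it. I expect ruling out the larger suture counts to be the \textbf{main obstacle}, precisely because it cannot be deduced from reducedness and must instead come from this global Seifert-surface-in-$S^3$ structure.

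With the types in hand, horizontal primeness and the isolating behaviour are more direct. Any incompressible horizontal surface is homologous to $R_\pm$, hence Thurston-norm-minimizing of zero Euler characteristic with boundary of slope $s$, so it is a union of slope-$s$ annuli. In a solid torus (Types~I,~II) such annuli are boundary-parallel, so every horizontal surface is isotopic into $R(\gamma_G)$ and the guts are incompressibly horizontally prime; in a knot exterior (Type~III) the same holds exactly when $s$ is not a boundary slope, while a boundary slope supplies an essential annulus violating primeness, matching the statement. The isolating assertions I verify by inspecting the suture pattern: with only two sutures (Types~II,~III) each of $R_+$ and $R_-$ is a single annulus incident to both sutures, so the guts are non-isolating, the sole exception being the meridional slope in Type~III, where the meridian disk on the non-guts side of $T$ exhibits an isolating configuration; with four longitudinal sutures (Type~I) one displays the isolating case whose ambient product part splits into the stated $P_1$ meeting one suture and $P_3$ meeting the other three, while the remaining attachments are non-isolating.

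Finally the handle numbers. For Types~I and~II, $h(G,\gamma_G)=2$ follows by exhibiting an explicit two-handle sutured Heegaard splitting of the solid torus and noting $h=0$ is impossible because the guts are non-product; this is consistent with Theorem~\ref{thm:handlenumberofgutsishandlenumberofmfld}. For Type~III I pass a sutured Heegaard splitting of $(G,\gamma_G)$ to the Dehn filling of $G$ along the suture slope $s$: capping the slope-$s$ annuli $R_+$ and $R_-$ by meridian disks of the filling solid torus converts the splitting into an ordinary Heegaard splitting of the filled manifold, and bookkeeping the handles yields $h(G,\gamma_G)\ge 2g$, with $g$ the Heegaard genus of that filling. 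The two substantive steps are therefore the suture-count bound of the second paragraph and this last estimate, which requires controlling the contribution of $R_\pm$ under capping; the resulting horizontal-primeness and non-isolating conclusions then feed, via Theorems~\ref{theo:sutmanwithnonisolatinggutsishoriprime} and~\ref{thm:handlenumberofgutsishandlenumberofmfld}, into the applications recorded earlier in the paper.
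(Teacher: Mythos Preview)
Your proposal contains a genuine conceptual error in the second paragraph that derails the suture-count argument. You assert that a solid torus (or knot exterior with non-boundary slope) carrying $2k$ parallel sutures ``has no essential product disk or product annulus'' because ``in a solid torus every incompressible annulus is boundary-parallel.'' This conflates two distinct notions: an \emph{essential product annulus} in a sutured manifold is one not parallel into a suture annulus $A(\gamma_G)$, which is strictly weaker than being non-$\partial$-parallel as a surface. Concretely, in a solid torus with $2k\geq 6$ longitudinal sutures, or with $2k\geq 4$ sutures of winding number $\geq 2$, or in a knot exterior with $2k\geq 4$ sutures, take an annulus $A$ that is $\partial$-parallel to an annulus $A'\subset\partial G$ containing exactly three sutures. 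This $A$ is $\partial$-parallel as a surface, yet it is \emph{not} parallel into any single suture: the parallelism region on the $A'$ side meets three sutures, and on the other side either meets $\geq 3$ sutures or fails to be a product region (when the slope is non-longitudinal or $G$ is a knot exterior). Hence $A$ is an essential product annulus and the sutured manifold is not reduced. So reducedness alone \emph{does} bound the suture count---this is precisely the paper's argument---and no appeal to the global Seifert-surface structure is needed here. Since your planned detour through the ambient product part is only sketched, the classification into the three types is left without proof.

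Two further gaps deserve mention. Your horizontal-primeness argument claims an incompressible horizontal surface is ``Thurston-norm-minimizing of zero Euler characteristic,'' but incompressibility does not imply norm-minimality; the paper instead argues that when the suture slope is not a boundary slope, any incompressible surface with that boundary must be $\partial$-compressible and hence a $\partial$-parallel annulus (via \cite[Lemma 1.10]{waldhausen}). And your treatment of the isolating/non-isolating dichotomy is too cursory: this is a property of how $G$ sits inside the ambient $(M,\gamma)$, not of $(G,\gamma_G)$ intrinsically, and the paper's Proposition~\ref{prop:isolationofgeneralguts} uses genuine homological constraints from the embedding in $S^3$ (e.g.\ a slope of winding number $\geq 2$ cannot bound in the complementary solid torus, and certain configurations would produce non-orientable or non-separating closed surfaces in $S^3$) that ``inspecting the suture pattern'' does not supply.
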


Recall that a {\em boundary slope} of a manifold with torus boundary is the slope of the boundary of a properly embedded orientable essential surface.  Here, {\em essential} means that the surface is incompressible, $\bdry$--incompressible, and not $\bdry$--parallel.
As shown by Waldhausen \cite[Lemma 1.10]{waldhausen}, in an irreducible orientable $3$-manifold with toroidal boundary, an orientable incompressible surface is either $\bdry$--incompressible or a $\bdry$--parallel annulus.

\begin{proof}
    Suppose the guts $(G, \gamma_G)$ of the complementary sutured manifold to an incompressible Seifert surface of a knot in $S^3$ has torus boundary.   Since $\bdry G$ is a torus in $S^3$, either $G$ is a solid torus or it is a non-trivial knot exterior.  Any compression of $R(\gamma_G)$ would lead to a compression of the original Seifert surface.  Hence $\gamma_G$ is a collection of annular sutures of the same slope in $\bdry G$.   If $\gamma_G$ has at least $4$ components, then there is a vertical  annulus $A$ that is $\bdry$--parallel to an annulus $A'$ in $\bdry G$ that contains $3$ components of $\gamma_G$.   Unless the annulus $A'' = \bdry G \cut A'$ contains exactly one component of $\gamma_G$ and is isotopic rel-$\bdry$ to $A'$, then $(G,\gamma_G)$ cannot be the guts as it admits a further non-trivial decomposition along $A$.   Of course $A''$ contains exactly one component of $\gamma_G$ when $\gamma_G$ has just four components, and moreover $A''$ is isotopic to $A'$ exactly when $G$ is a solid torus and the sutures $\gamma_G$ have longitudinal slope.  This gives the guts of Type I.  Otherwise $\gamma_G$ has two components. If $G$ is a solid torus, then these components cannot be longitudinal since otherwise $(G,\gamma_G)$ would be a product sutured manifold.  This gives the guts of Type II.  If $G$ is a non-trivial knot exterior, then since $(G,\gamma_G)$ can have no essential vertical annulus, the slope of $\gamma_G$ cannot be the boundary slope of an essential annulus.  This gives the guts of Type III.

    Proposition~\ref{prop:handlesofgeneralguts} addresses the handle numbers of the guts while
    Proposition~\ref{prop:isolationofgeneralguts} addresses when the guts are non-isolating. So now we address here that the guts are incompressibly horizontally prime.  This follows fairly immediately from the classification of types of guts.
    Guts of Types I and II are incompressibly horizontally prime since the only connected incompressible properly embedded orientable surfaces in solid tori are meridional disks, boundary parallel disks, and boundary parallel annuli whose boundary components are essential curves of non-meridional slope (eg. \cite[Lemma 1.10]{waldhausen} or \cite[Lemma 3.7.14]{schultensbook}).  
    If there were guts of Type III that were not incompressibly horizontally prime,  
    then they would have a horizontal surface $F$ that is not isotopic to either $R_+(\gamma_G)$ or $R_-(\gamma_G)$. In particular, $F$ would be a properly embedded, oriented incompressible surface with two boundary components that are the cores of the two sutures $\gamma_G$.  
    However, as $\gamma_G$ is not a boundary slope by hypothesis, $F$ must be a boundary parallel annulus and therefore isotopic to either $R_+(\gamma_G)$ or $R_-(\gamma_G)$ contrary to assumption.
\end{proof}


\begin{prop}\label{prop:seifertfiberedguts}
Let $(G, \gamma_G)$ be guts of Type III.
Suppose $G$ is a non-trivial torus knot exterior where the slope of $\gamma_G$ is not the slope of a regular fiber.  Then $h(G, \gamma_G) = 2$  if each suture meets a regular fiber once, and $4$ otherwise. If the slope of $\gamma_G$ is also not the $0$-slope of the torus knot, then $(G, \gamma_G)$ is incompressibly horizontally prime.
\end{prop}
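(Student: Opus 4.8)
The plan is to treat the handle-number and the incompressibly-horizontally-prime conclusions separately, using the Seifert fibered structure of $G$ throughout. Write $G$ for the exterior of the torus knot $T_{p,q}$ with coprime $p,q\geq 2$, viewed as a Seifert fibered space over the disk with two exceptional fibers of orders $p$ and $q$, and let $\phi$ denote the slope on $\bdry G$ of a regular fiber (the cabling slope, which is $pq$ in the standard framing). Since the two sutures are parallel, the phrase ``each suture meets a regular fiber once'' is exactly the condition $d:=\Delta(\alpha,\phi)=1$, where $\alpha$ is the slope of $\gamma_G$ and $\Delta$ is geometric intersection number; the standing hypothesis that $\alpha$ is not a regular fiber slope says $d\geq 1$. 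I would first record the classical fact (Moser) that the Dehn filling $G(\alpha)$ is the Seifert fibered space over $S^2$ with exceptional fibers of orders $p,q,d$: when $d=1$ this has only two genuine exceptional fibers and is a lens space, and when $d\geq 2$ it is a small Seifert fibered space with three exceptional fibers of order $\geq 2$.

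For the lower bounds I would invoke the Type III clause of Theorem~\ref{thm:gutstructure}, which gives $h(G,\gamma_G)\geq 2\,g(G(\alpha))$, where $g$ denotes Heegaard genus. When $d\geq 2$ the filling is a Seifert fibered space over $S^2$ with three exceptional fibers of order $\geq 2$; by the classification of Heegaard splittings of Seifert fibered spaces (Boileau--Otal, Moriah--Schultens) such a manifold is not a lens space yet admits a genus-$2$ vertical splitting, so its Heegaard genus is exactly $2$ and hence $h(G,\gamma_G)\geq 4$. When $d=1$ I would instead use the intrinsic bound $h(G,\gamma_G)\geq 2$, valid because nonempty guts are never a product sutured manifold; this sidesteps the degenerate fillings (for instance $G(\mu)=S^3$ when $\alpha$ is meridional) where the Heegaard-genus bound is vacuous.

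For the upper bounds I would construct explicit vertical sutured Heegaard splittings of $(G,\gamma_G)$ adapted to the cabling-annulus decomposition $G=V_p\cup_A V_q$ into two fibered solid tori glued along the vertical annulus $A$ of slope $\phi$. The cleanest route is to take a minimal-genus vertical Heegaard splitting of the filling $G(\alpha)$ and isotope the core $K_\alpha$ of the filling solid torus, which is a Seifert fiber (a regular fiber when $d=1$, the order-$d$ exceptional fiber when $d\geq 2$), \emph{onto} the Heegaard surface $\Sigma$; then $\bdry N(K_\alpha)$ meets $\Sigma$ in two curves that become the two sutures of slope $\alpha$, with $R_+(\gamma_G)$ and $R_-(\gamma_G)$ falling on opposite sides of $\Sigma$. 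When $d=1$ the genus-$1$ splitting of the lens space $G(\alpha)$ yields handle number $2$, and when $d\geq 2$ the genus-$2$ splitting of the small Seifert fibered space yields handle number $4$. Combined with the lower bounds, this gives $h(G,\gamma_G)=2$ if $d=1$ and $h(G,\gamma_G)=4$ otherwise. I expect this construction to be the main obstacle: one must verify that the core $K_\alpha$ can genuinely be isotoped onto $\Sigma$ (rather than merely into the interior of a handlebody, which would place the whole torus $\bdry N(K_\alpha)$ on one side and fail to produce two sutures), and that the resulting handle count is exactly $2$ (resp.\ $4$) with $R_\pm(\gamma_G)$ correctly distributed across $\Sigma$.

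Finally, for the incompressibly-horizontally-prime conclusion I would appeal again to the Type III clause of Theorem~\ref{thm:gutstructure}, which requires only that $\alpha$ not be a boundary slope of $G$. The boundary slopes of a nontrivial torus knot exterior are exactly $0$ (realized by the Seifert surface) and $pq=\phi$ (realized by the cabling annulus). Since $\alpha\neq\phi$ by hypothesis and $\alpha\neq 0$ by the hypothesis of this clause, the slope $\alpha$ is not a boundary slope of $G$, and therefore $(G,\gamma_G)$ is incompressibly horizontally prime.
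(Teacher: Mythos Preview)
Your lower-bound argument and your incompressibly-horizontally-prime argument are essentially the paper's own. The paper packages the handle-number computation as Example~\ref{exa:torusknotext}: it uses Moser for the filling $G(\alpha)$, invokes Lemma~\ref{lem:handletoposn} (the Heegaard-genus bound you quote via Theorem~\ref{thm:gutstructure}) to get $h\geq 4$ when $d\geq 2$, and for IHP it notes, exactly as you do, that the only boundary slopes of a torus-knot exterior are the Seifert-surface slope $0$ and the regular-fiber slope $\phi$, then appeals to Theorem~\ref{thm:gutstructure}.

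The gap is in your upper-bound construction. If $K_\alpha$ lies \emph{on} $\Sigma$ and you drill it out, the two curves $\bdry N(K_\alpha)\cap\Sigma$ on $\bdry G$ carry the $\Sigma$-framing of $K_\alpha$; this is a longitudinal slope for $K_\alpha$, not its meridian, so these curves do not have slope $\alpha$ and you have not produced a Heegaard splitting of $(G,\gamma_G)$ at all. What yields meridional sutures is transverse bridge position: in a $(g,1)$-position $K_\alpha$ meets $\Sigma$ in two points and lies as a trivial arc in each handlebody, so drilling replaces the two points by two meridional circles of slope $\alpha$, and Lemma~\ref{lem:easygbbbounds} then reads off $h\leq 2g$. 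This is exactly how the paper obtains the upper bounds in Example~\ref{exa:torusknotext}: tunnel number one for $T_{p,q}$ gives a $(2,1)$-position of $K_\alpha$ in every filling, hence $h\leq 4$, and when $d=1$ the core $K_\alpha$ has a $(1,1)$-position in the lens space $G(\alpha)$, hence $h\leq 2$. Your idea can be repaired in the $d=1$ case---an essential curve on a Heegaard torus perturbs to $(1,1)$-position by pushing all but a short arc to one side---but that perturbation is not what you wrote, and as stated your sutures have the wrong slope.
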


\begin{proof}
The handle number calculations follow from the application of Lemma~\ref{lem:handletoposn} to torus knot exteriors in Example~\ref{exa:torusknotext}.  Since the only boundary slopes of a torus knot exterior are the slope of a regular fiber of the Seifert fibration and the boundary slope of the Seifert surface, the only incompressible surfaces in a Seifert fibered space, that this sutured manifold is incompressibly horizontally prime follows from Theorem~\ref{thm:gutstructure}.
\end{proof}

\begin{example}
    The $n$-twisted Whitehead double of a $(p,q)$ torus knot will have guts as in Proposition~\ref{prop:seifertfiberedguts} if $n\neq pq$.  In particular, if $n\neq pq \pm1$ then its guts will have handle number $4$ and so will the knot.
\end{example}

\begin{proof}[Proof of Theorem~\ref{thm:stuructreofnearlyfibered}]
A nearly fibered knot has guts of one of three models, as described as above,  \cite{Gutsofnearlyfiberedknots}.  These three models are cases of the three types of guts described in Theorem~\ref{thm:gutstructure} where the third model belongs to the specialization of the Type III guts further discussed in Proposition~\ref{prop:seifertfiberedguts}.

Theorem~\ref{thm:gutstructure}  informs us that the first two models have handle number $2$.  Hence nearly fibered knots with these models of guts must have handle number $2$. Proposition~\ref{prop:seifertfiberedguts} further informs us that the third model has handle number $4$ since a regular fiber of the Seifert fibered structure in the boundary of the exterior of the right handed trefoil has slope $6$.
Since Type III guts are non-isolating by Theorem~\ref{thm:gutstructure}, Theorem~\ref{thm:handlenumberofgutsishandlenumberofmfld} implies that nearly fibered knots with guts of the third model must have handle number $4$.

By Theorem~\ref{thm:gutstructure} all these guts are incompressibly horizontally prime and only Type I guts (which are the first model of guts) may be isolating.
When the guts are non-isolating, Corollary~\ref{cor:uniqueSeifert} shows that they have unique incompressible Seifert surface.  When the guts have Type I and are isolating, we construct an incompressible Seifert surface of higher genus in \S\ref{sec:exampleofnearlyfibredknotwithtypeI}.
\end{proof}

\begin{remark}
Nearly fibered knots are almost-fibered, but almost-fibered knots are not necessarily nearly-fibered.
In \cite{CTPforKnots} an \textit{almost-fibered} knot $K$ is defined as a  non-fibered knot $K$ that possesses a circular Heegaard splitting $(E(K),F,S)$ such that 
\begin{itemize}
    \item $F$ and $S$ are connected Seifert surfaces for $K$ and
    \item $1-\chi(S)$ realizes the circular width of $K$.
\end{itemize}

In \cite{CTPforKnots} is it proved that in a circular thin position the thin levels are incompressible and the thick levels weakly incompressible. An immediate consequence is that knots with a unique incompressible Seifert surface are almost-fibered. 
Also, it follows that knots with a minimal genus Seifert surface of handle number $2$ are almost-fibered.
Thus nearly fibered knots are almost-fibered.

On the other hand, there are almost-fibered knots which are not nearly fibered.  For instance, all non-fibered prime knots with up to ten crossings have minimal genus Seifert surfaces of handle number $2$ (see \cite{Goda-HandleNumberSS}) and hence they are almost-fibered.  However, some of them have non-unique minimal genus Seifert surfaces, and are therefore not nearly fibered. 
\end{remark}

\subsection{Acknowledgements}

KLB was partially supported by the Simons Foundation (gifts \#523883 and \#962034 to Kenneth L.\ Baker).
He also thanks the University of Pisa where a portion of this work was done.

FM-G was partially supported by Grant UNAM-PAPIIT IN113323 and by IMSA, the Institute for the Mathematical Sciences of the Americas at the University of Miami.  

We thank IMSA and the Department of Mathematics at the University of Miami for their support and hospitality during the much of the production of this work.

 \section{Sutured manifolds}

Sutured manifolds were introduced by Gabai \cite{Gabai_foliationsandthetopologyof3manifolds} 
and provide a convenient framework for discussing compression bodies and various generalizations of Heegaard splittings, e.g. \cite{Goda-circlevaluedmorsetheory, Baker-MN}.
Indeed, Casson-Gordon's treatment of Heegaard splittings effectively takes this approach \cite{CG}.

Throughout, for simplicity, we assume our $3$-manifolds are irreducible. 
A {\em sutured manifold} is a compact oriented $3$--manifold $M$ with a disjoint pair of subsurfaces $R_+$ and $R_-$ of $\bdry M$ such that
\begin{itemize}
    \item the orientation of $R_+$ is consistent with the boundary orientation of $\bdry M$ while the orientation of $R_-$ is reversed,
    \item $\bdry M \cut (R_+ \cup R_-)$ is a collection $\gamma$ of annuli $A(\gamma)$ and tori $T(\gamma)$, collectively called the {\em sutures}, and
    \item each annular suture joins a component of $\bdry R_+$ to a component of $\bdry R_-$.
\end{itemize}

Typically, sutured manifolds are written as $(M,\gamma)$ where $\gamma$ represents the sutured structure.  With this sutured structure understood, we also may use $\gamma$ to represent the core curves of $A(\gamma)$, oriented to be isotopic to $\bdry R_+$.  
At times it is convenient to use $\bdry_+ M$ and $\bdry_- M$ to refer to the surfaces $R_+$ and $R_-$.
(One may also care to use $\bdry_v M$, for {\em vertical} boundary, to refer to the annular sutures $A(\gamma)$.)
We may also use variations of $R_\pm(M)$ or $R_\pm(\gamma)$ for these surfaces $R_-$ and $R_+$.

Suppose $(S, \bdry S)$ and $(T,\bdry T)$ are two embedded oriented surfaces in general position in $(M, \bdry M)$. Define $T \dcsum S$ to be the {\em double curve sum of $T$ and $S$}, i.e., cut and paste along the curves of intersection to get an embedded oriented surface representing the cycle $T+S$. The surface $T \dcsum S$ is an embedded oriented surface coinciding with $T \cup S$ outside a regular neighborhood of the intersections $T \cap S$.  See \cite[Definition 1.1]{Scharlemann_SM}.

A {\em decomposing surface} $(S,\bdry S)\subset(M, \bdry M)$ is an oriented properly embedded surface which intersects each toroidal suture in coherently oriented parallel essential circles and each annular suture either in circles parallel to the core of the annulus or in essential arcs (not necessarily oriented coherently). There is a natural sutured manifold structure $(M', \gamma')$ on $M'= M\cut S$, where:
\begin{itemize}
    \item $\gamma'=(\gamma \cap M')\cup \nbhd(S_+\cap R_-)\cup \nbhd(S_- \cap R_+)$,
    \item $R_+'= ((R_+\cap M')\cup S_+)-int(A(\gamma'))$, and
    \item $R_-'= ((R_-\cap M')\cup S_-)-int(A(\gamma'))$.
\end{itemize}
Here $\nbhd(S) = S \times (-\epsilon, \epsilon)$ is a bicollar neighborhood of $S$ where 
$S=S\times \{0\}$ and we set $S_{\pm}= S\times \{\mp \epsilon \}$.  With this arrangement, the normal of the oriented surface  $S_+$ points out of $M'=M\cut S$ and of $S_-$ points into $M'=M\cut S$.
We say $(M',\gamma')$ is obtained from a {\em sutured manifold decomposition} of $(M,\gamma)$ along $S$, and we denote this decomposition as $(M,\gamma) \overset{S}{\leadsto} (M',\gamma')$. 

\subsection{Horizontal surfaces}

\begin{defn}[Cf.{\cite [Defn 9.3]{Juhasz-FloerHomSurDecomp}, \cite[Defn 3.4]{AgolZhang}, \cite[Defn 2.7]{ni-KFHdetectsfibered}}]
\label{defn:horizontal}
Let $(M,\gamma)$ be a sutured manifold without toroidal sutures.  A decomposing surface $S \subset M$ is an {\em incompressible horizontal surface} if 
\begin{enumerate}
    \item $S$ is incompressible,
    \item $\bdry S \subset \gamma$ with $\bdry S$ isotopic to $\bdry R_+(\gamma)$,
    \item $[S, \bdry S] = [R_+(\gamma), \bdry R_+(\gamma)]$ in $H_2(M,\gamma)$, and
    \item no collection of closed components of $S$ is null-homologous.
\end{enumerate}
This definition differs from that of a {\em horizontal surface} in \cite[Defn 9.3]{Juhasz-FloerHomSurDecomp} and \cite[Defn 3.4]{AgolZhang} in several ways.  Notably, an incompressible horizontal surface need not be taut, and we do not require the sutured manifold to be `balanced'.  
\end{defn}

\begin{defn}[{Cf.\ \cite[Defn 9.3]{Juhasz-FloerHomSurDecomp}}]  \label{defn:horizontallyprime}
A sutured manifold $(M,\gamma)$ without toroidal sutures is {\em incompressibly horizontally prime} if any incompressible horizontal surface in $(M,\gamma)$ is isotopic into a collar of  $R_+(\gamma) \cup R_-(\gamma)$. 
\end{defn}

\begin{remark}\label{rem:kobayashi}
    Kobayashi says a sutured manifold $(M,\gamma)$ is an {\em almost product sutured manifold (APSM)} if any incompressible horizontal surface is isotopic either to $R_+(\gamma)$ or to $R_-(\gamma)$ \cite{kobayashi}. Kobayashi shows that the property of being APSM pulls back under product disk decompositions and a set of ``A-operations''.  
    
    Note that $(M,\gamma)$ cannot be APSM if $R_+$ and $R_-$ are both incompressible and $\bdry M$ is disconnected unless $(M,\gamma)$ is a product sutured manifold.  
    However, when $\bdry M$ is connected, an incompressibly horizontally prime sutured manifold $(M,\gamma)$ is an APSM. 
\end{remark}


\subsection{Product decomposition surfaces and guts}

A {\em product annulus} in $(M, \gamma)$ is a properly embedded annulus $A$ in $M$ such that does not cobound a solid cylinder in $M$ and such that one boundary component of $A$ lies on $R_-$ and the other on $R_+$. An essential loop in a product annulus is a {\em horizontal} loop. 
A {\em product disk} is a properly embedded disk $D$ in $M$ such that $\vert D\cap \gamma\vert=2$.  In particular, a product disk $D$ meets $A(\gamma)$ in a pair of spanning arcs of $A(\gamma)$.  A properly embedded arc $\alpha$ in a product disk $D$ is {\em horizontal} if each of the spanning arcs of $D \cap A(\gamma)$ contains an endpoint of $\alpha$.

 A product annulus or product disk is {\em essential} if it is not parallel into $A(\gamma)$.

A decomposing surface is a {\em product decomposition surface} if each of its components a product disk or a product annulus.
It is {\em essential} if each component is essential and no two components are parallel.  It is {\em maximal} if it is not a proper subset of any other essential product decomposition surface.


\begin{defn}[Cf. {\cite[Defn 4.10]{Gabai_foliationsandthetopologyof3manifolds}}]
A sutured manifold $(G, \gamma_G)$ is {\em reduced} if it is not a product sutured manifold and any product disk or product annulus is parallel into $A(\gamma_G)$.
A product decomposition surface of a 
sutured manifold $(M,\gamma)$ is {\em reducing} if it decomposes $(M,\gamma)$ into a product sutured manifold $(P, \gamma_P)$ and a reduced sutured manifold $(G, \gamma_G)$.  
Any reduced sutured manifold $(G, \gamma_G)$ resulting from decomposing $(M,\gamma)$ along a reducing product decomposition surface may be regarded as {\em guts} of $(M,\gamma)$.

As Gabai notes, the guts of a taut sutured manifold  $(M,\gamma)$ are well-defined and independent of choice of maximal essential product decomposition surface. Indeed when $R_\pm(\gamma)$ are only incompressible JSJ Theory \cite{JSJ, Johannson} similarly implies that the guts are well-defined. Hence when $R_\pm(\gamma)$ are incompressible, we may refer to the resulting reduced sutured manifold as the guts of $(M,\gamma)$.

A reduced product decomposition surface is {\em minimal} if it does not properly contain another reducing product decomposition surface.
Observe that any maximal essential product decomposition surface is a reducing product decomposition surface.
Furthermore any minimal reducing product decomposition surface is contained in a maximal essential product decomposition surface and their reduced sutured manifolds are the same.

\end{defn}

\begin{remark}
    Guts of taut sutured manifolds in \cite{AgolZhang} require first decomposing along essential taut horizontal surfaces.  When essential taut horizontal surfaces exist, however, the guts are necessarily disconnected.  In this article, we focus on sutured manifolds with connected guts.  
\end{remark}


\begin{defn}\label{defn:isolation}
    A product annulus $A$ of a minimal reducing product decomposition surface is {\em isolated} if the interior of any path in $(M,\gamma)$ from $A$ to $A(\gamma)$ intersects the guts $(G, \gamma_G)$.  Observe that such an isolated annulus appears in every reducing product decomposition.

    Say the guts $(G, \gamma_G)$ of a sutured manifold $(M, \gamma)$ 
    are {\em non-isolating} if $(M, \gamma)$ has a minimal reducing product decomposition surface without an isolated annulus and {\em isolating} otherwise.
\end{defn}




\subsection{Non-isolating guts means product disk reduction}

\begin{lemma}\label{lem:nonisolatingmeansproductdisks}
    If the guts $(G, \gamma_G)$ of a sutured manifold $(M, \gamma)$ are  non-isolating, then $(M, \gamma)$ has a minimal reducing product decomposition surface consisting of only product disks.
\end{lemma}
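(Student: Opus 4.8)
The plan is to induct on the number of product annuli appearing in a minimal reducing product decomposition surface. By the non-isolating hypothesis and the definition of non-isolating guts, fix a minimal reducing product decomposition surface $\Sigma$ none of whose product annuli is isolated. If $\Sigma$ has no product annuli we are done, so suppose it contains a product annulus $A$. The goal of the inductive step is to produce a reducing product decomposition surface with strictly fewer product annuli; iterating yields a disk-only reducing surface, and at the very end I would pass to a minimal reducing subsurface, which remains disk-only.

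Since $A$ is not isolated, there is a path $\alpha$ from $A$ to a component of $A(\gamma)$ whose interior is disjoint from the guts $(G,\gamma_G)$. After a general-position and innermost/outermost argument I would arrange $\alpha$ to lie in the product part $P$ of the decomposition $(M,\gamma)\cut\Sigma = P \sqcup (G,\gamma_G)$ and to meet the remaining components of $\Sigma$ minimally. In particular $\alpha$ certifies that the product region $P_0$ of $P$ adjacent to $A$ can be joined to a genuine annular suture of $(M,\gamma)$ without crossing $G$. Writing $P_0 \cong R_0 \times I$ with $A$ appearing as $c_0 \times I$ for a boundary circle $c_0$ of the base $R_0$, the path supplies a properly embedded arc in $R_0$ with endpoints on boundary circles carrying sutures of $(M,\gamma)$.

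Using such arcs I would build product disks in $P_0$ — each of the form $\beta \times I$ for an arc $\beta$ whose endpoints lie on sutures of $(M,\gamma)$, equivalently product disks carrying the path as a horizontal arc — and decompose $P_0$ along them. The effect on the base is to cut $R_0$ until the component containing $c_0$ becomes an annulus cobounded by $c_0$ and a genuine suture curve; then $A$ is boundary-parallel to that suture, hence inessential, and may be discarded. Thus $A$ is traded for a collection of product disks, and the resulting surface $\Sigma'$ is again a product decomposition surface with one fewer product annulus. Because both product-disk and product-annulus decompositions preserve the reduced piece, and the guts are well defined once $R_\pm(\gamma)$ are incompressible, decomposing along $\Sigma'$ still yields the same guts $(G,\gamma_G)$ together with a product part; hence $\Sigma'$ is reducing.

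The main obstacle is this middle step: exhibiting, from the non-isolation path alone, product disks whose boundary arcs lie on genuine sutures (not on the interior annulus $A$, and not on other interface annuli of $\Sigma$) and which genuinely render $A$ boundary-parallel. The difficulty is that $\alpha$ may reach a suture only after traversing other product regions or other interface annuli, so one must either reroute $\alpha$ within a single product region or feed those outer interface annuli back into the induction. One must also verify that the trade creates no new isolated annulus, so that the inductive hypothesis — a reducing surface with no isolated annulus — is preserved at each stage.
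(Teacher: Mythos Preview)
Your overall strategy matches the paper's: minimize the number of annuli among minimal reducing product decomposition surfaces and, if an annulus remains, trade it for product disks. The gap you flag in your ``main obstacle'' paragraph is genuine, and it is precisely the step where the paper's argument does the work that yours leaves undone.

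The paper fills it with two moves you are missing. First, rather than fixing the annulus $A$ in advance, follow the non-isolation path and replace $A$ by the \emph{last} annulus of $\Pi$ the path crosses; from there to $A(\gamma)$ the path meets no further annuli of $\Pi$, and if it meets a product disk of $\Pi$ one diverts along that disk to reach $A(\gamma)$. The rerouted path then lies entirely in a single component of the product part $(P,\gamma_P)$, where it is homotopic rel endpoints into a product disk $D$ of $(P,\gamma_P)$ with one vertical side on $A$ and the other on $A(\gamma)$. Second---and this is what your scheme of cutting $P_0$ along arcs $\beta\times I$ with both endpoints on genuine sutures does not supply---the paper takes the frontier of a regular neighborhood $\nbhd(A\cup D)$ in $M$. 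That frontier contains, besides a parallel copy of $A$, a single product disk $D'$ of $(M,\gamma)$ whose two vertical arcs both lie in $A(\gamma)$. Replacing $A$ by $D'$ gives $\Pi'=(\Pi\setminus A)\cup D'$, again a minimal reducing product decomposition surface but with one fewer annulus, yielding the contradiction. Your plan requires arcs with both endpoints on genuine sutures, but the non-isolation path only hands you one such endpoint; the $\bdry\nbhd(A\cup D)$ trick converts that single disk $D$ (one side on $A$, one on $A(\gamma)$) into a bona fide product disk of $(M,\gamma)$ in one stroke.

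Your secondary worry about creating new isolated annuli is handled by the observation, recorded in the paper's definition, that an isolated annulus necessarily appears in \emph{every} reducing product decomposition; hence once the guts are non-isolating, no minimal reducing surface---including $\Pi'$---can carry one.
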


\begin{proof}
    Suppose the guts $(G, \gamma_G)$ of $(M, \gamma)$ is non-isolating.  Then is has a minimal reducing product decomposition surface without any isolated annulus.  Among minimal reducing product decomposition surfaces, let $\Pi$ be one with the fewest annuli.
    If $\Pi$ has no annuli, then we are done.  
    So suppose $\Pi$ has an annulus. Then there is path in $(M,\gamma)$ from this annulus of $\Pi$ to $A(\gamma)$ that is disjoint from $(G,\gamma_G)$.  By starting the path at the last annulus of $\Pi$ it hits --- say $A$ --- and then diverting the path along the first product disk it meets to $A(\gamma)$, we may assume this path is also disjoint from $\Pi$ except at its start.  Hence this path lies in a  component of the product sutured manifold $(P,\gamma_P)$ after the decomposition $(M,\gamma)$.  Thus the path is homotopic rel-$\bdry$ into a product disk $D$ of $(P,\gamma_P)$.  Pulling back to $(M,\gamma)$, this product disk $D$ joins the annulus $A$ of $\Pi$ to $A(\gamma)$.
    Then $\bdry \nbhd(A \cup D)$ decomposes as an annulus in each $R_+(\gamma)$ and $R_-(\gamma)$, a parallel copy of $A$, a spanning rectangle of $A(\gamma)$, and a product disk $D'$. See Figure \ref{fig:tradingannulusfordisk}.
    We may now trade the product annulus $A$ for the product disk $D'$ to produce a new minimal reducing product decomposition surface $\Pi' = (\Pi- A) \cup D$.  However $\Pi'$ has fewer annuli than $\Pi$, contrary to assumption.
    \end{proof}

\begin{remark}
    In fact one can show that if  $(M, \gamma)$ has  non-isolating guts $(G, \gamma_G)$, then any  reducing product decomposition surface has no isolated product annulus.
\end{remark}

\subsection{Isolation of guts with torus boundary}

\begin{prop}\label{prop:isolationofgeneralguts}
Suppose the guts $(G, \gamma_G)$ of the complementary sutured manifold $(M,\gamma)$ to an incompressible Seifert surface $S$ of a knot in $S^3$ is homeomorphic 
to a sutured manifold of Type I, Type II, or Type III. 
Then
\begin{itemize}
    \item Type I guts are either non-isolating or any minimal reducing product decomposition surface of $(M,\gamma)$ has exactly one isolated product annulus,

    \item Type II guts are non-isolating, and

    \item Type III guts are non-isolating if the slope of $\gamma_G$ is not the meridional slope of $G$. 
\end{itemize}

\end{prop}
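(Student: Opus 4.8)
The plan is to reduce all three statements to a connectivity analysis of the product part. Fix a minimal reducing product decomposition surface $\Pi$ giving $(M,\gamma)\overset{\Pi}{\leadsto}(P,\gamma_P)\sqcup(G,\gamma_G)$. Since the knot is connected, $A(\gamma)$ is a single annulus, and I would first record the basic dictionary: each suture of $\gamma_G$ is the $G$--side copy of a product annulus of $\Pi$, whose $P$--side copy is a vertical suture of a component of $P$. A product annulus $A$ of $\Pi$ is then isolated exactly when the component of $P$ meeting its $P$--side copy does not also meet $A(\gamma)$, because any path from $A$ to $A(\gamma)$ that avoids $G$ must run inside $P$. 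By Lemma~\ref{lem:nonisolatingmeansproductdisks}, $(G,\gamma_G)$ is non-isolating precisely when some minimal reduction has every component of $P$ meeting $A(\gamma)$; so the whole proposition becomes a question of how the components of $P$ distribute the sutures of $\gamma_G$ relative to the component $P_0\supset A(\gamma)$.

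For Type II, I would show $P$ must be connected, so both sutures meet $A(\gamma)$ and no annulus is isolated. The two sutures cut $\bdry G$ into the annuli $R_\pm(G)$, which are subsurfaces of the two copies of $S$, and the surface $F$ with $P=F\times I$ is identified with $S$ cut along the cores of the winding-number-$\geq 2$ sutures. The key point is that a dead-end component $P_1$ capping a single suture $A$ off from $A(\gamma)$ would have $R_\pm(P_1)\subset S$ bounded only by the core of $A$; pushing $A$ across $P_1$ and using that $(G,\gamma_G)$ is reduced (so it has no essential product annulus in the solid torus) together with the winding number being $\geq 2$, I would derive either an essential annulus in $G$ or a compression of $S$, contradicting incompressibility. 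Hence no such $P_1$ exists and $P$ is connected.

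For Type III with $G=E(J)$, the dichotomy is governed by the slope $\sigma$ of $\gamma_G$ through the complementary solid torus $N(J)=S^3\setminus\Int G$. If $\sigma$ is not the meridian $\mu_J$, I would produce a minimal reduction using product disks by running each suture out to $A(\gamma)$ inside $P$; the only obstruction to doing so is a properly embedded disk in $N(J)$ meeting $G$ along a single essential curve of $\gamma_G$, which forces $\sigma=\mu_J$. If instead $\sigma=\mu_J$, then a meridian disk of $N(J)$ supplies a product annulus $A$ parallel to a suture whose far side is a dead-end product region not reaching $A(\gamma)$, so $A$ is isolated and, by the remark after Lemma~\ref{lem:nonisolatingmeansproductdisks}, every reduction inherits it. This is the isolating case, consistent with Question~\ref{ques:isolatinghandle} and the untwisted Whitehead double.

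For Type I, with four longitudinal sutures, I would analyze the partition of the four sutures among the components of $P$, using that $A(\gamma)$ lies in a single component $P_0$ and that a product annulus cannot cobound a solid cylinder, so no component of $P$ can meet exactly one suture unless it carries genus. An Euler-characteristic and no-solid-cylinder count should leave only two possibilities: either every component of $P$ meets $A(\gamma)$, giving non-isolating guts, or $P$ splits as $P_3\sqcup P_1$ where $P_3\supset A(\gamma)$ meets three sutures and $P_1$ meets the remaining one, in which case exactly the single product annulus bounding $P_1$ is isolated. I expect the main obstacle to be the Type II and meridional Type III step: precisely controlling when a product region can cap a suture off from $A(\gamma)$, i.e. proving this is impossible for winding number $\geq 2$ and for non-meridional slope while exhibiting it for longitudinal and meridional sutures. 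This hinges on relating the slope of a suture to the existence of an essential annulus in $G$, using reducedness of $(G,\gamma_G)$ and incompressibility of $S$.
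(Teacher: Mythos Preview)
Your setup and the reduction to analyzing how the components of $P$ distribute the sutures of $\gamma_G$ is exactly the paper's framework, and your homological reading of an isolated component for Types~II and~III is close to the mark. However, there are genuine gaps.

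For Types~II and~III, the clean mechanism is purely homological and you do not quite state it. If $P_1$ is an isolated component meeting the single suture $\gamma_1$, then the fiber $F_1$ is a surface in $S^3\setminus\Int G$ with $\bdry F_1$ the core of $\gamma_1$; hence the slope of $\gamma_1$ is null-homologous in $S^3\setminus G$. For Type~II this forces $\gamma_1$ to be a longitude of the solid torus $G$, contradicting winding number $\geq 2$; for Type~III it forces $\gamma_1$ to be the meridian of $G$. Your Type~II sketch (``push $A$ across $P_1$ and find an essential annulus in $G$ or a compression of $S$'') is not the right obstruction and would be hard to make precise; reducedness of $G$ does not by itself prevent such a $P_1$. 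You also overlook a preliminary case the paper handles first: a single annulus of $\Pi$ could meet $G$ on both sides, producing two sutures of $\gamma_G$ from one component of $\Pi$; this is eliminated by assembling a Klein bottle in $S^3$.

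The serious gap is Type~I. An ``Euler-characteristic and no-solid-cylinder count'' cannot by itself force the $3{+}1$ partition. The paper's argument is a case analysis that rules out the competing partitions $F_{\mathcal I}$ using three different topological obstructions available because $M\subset S^3$ and $S$ is a connected Seifert surface: (i)~configurations like $F_{01},F_{12},F_{23},F_{03}$ (adjacent pair) or $F_2$ together with $F_1$ or $F_3$ would force a closed component of $R_\pm(\gamma)$, hence of $S$; (ii)~$F_{13}$ or $F_{123}$ would, after capping with vertical annuli $A_{ij}\subset G$, produce a closed non-orientable surface in $S^3$; (iii)~$F_1$ together with $F_3$ (and consequently $F_{02}$) would produce a closed non-separating surface in $S^3$. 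Only after these eliminations is one left with $F=F_{0123}$ (non-isolating) or $F=F_i\sqcup F_{0jk}$ (exactly one isolated annulus). None of these obstructions is detected by Euler characteristic or by the no-solid-cylinder condition on product annuli, so your outline as written does not close the Type~I case.
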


\begin{remark}
Proposition~\ref{prop:isolationofgeneralguts} may be viewed as clarifying \cite[Remark~2.1(2)]{Gutsofnearlyfiberedknots}.
\end{remark}

\begin{proof}
Let $\Pi$ be a minimal reducing product decomposition surface of $(M,\gamma)$ giving the 
 decomposition
 \[(M,\gamma) \overset{\Pi}{\leadsto} (P, \gamma_P) \sqcup (G, \gamma_G)\]
 with product sutured manifolds  $(P, \gamma_P)$ and guts $(G, \gamma_G)$. 
Let $(P,\gamma_P) = (F \times [-1,1], \bdry F \times [-1,1])$ where we identify $F$ with $F \times \{0\}$. 

Assume that the guts are isolating so that there is at least one isolated annulus of $\Pi$.

If two components of $\gamma_G$ both result from the same isolated annulus of $\Pi$, then the guts must be Type I (because $\gamma \neq \emptyset$).  Since  $R(\gamma)$ has no closed components, the two components of $\gamma_G$ cannot be adjacent.  Then the annulus in $G$ joining the cores of these two components would then give rise to an embedded Klein bottle in $(M,\gamma)$.  This cannot occur since $M$ is a submanifold of $S^3$.
Therefore every annulus of $\Pi$ must join a suture of $\gamma_G$ with a suture of $\gamma_P$.
In particular, the components of $\gamma_G - \gamma$ are identified with the annuli of $\Pi$.


If $(G,\gamma_G)$ is of Type II or III, then $\gamma_G=\{\gamma_0,\gamma_1\}$.
Since the guts are isolating, then say $\gamma_1$ corresponds to an isolated annulus $\Pi_1$ of $\Pi$.   Let $F_1$ be the component of $F$ that meets $\Pi_1$.  Note that $\gamma_0$ cannot also correspond to an isolated annulus of $\Pi$ since $\gamma \neq \emptyset$.
 Hence the boundary of $F_1$ is the core curve of $\gamma_1$, implying that the slope of $\gamma_1$ is null-homologous in $S^3 \cut G$.  If $(G,\gamma_G)$ is of Type II,  then $\gamma_1$ must be a longitude for the solid torus $G$, which is contrary to the sutures having winding number of at least $2$ about $G$. If $(G,\gamma_G)$ is of Type III, then $\gamma_1$ must be a meridian of $G$ as claimed.


If $(G,\gamma_G)$ is of Type I, then $\gamma_G= \{\gamma_0, \gamma_1, \gamma_2, \gamma_3\}$ where these sutures are numbered in cyclic order around $\bdry G$.
Let $\Pi_i$ be the annulus of $\Pi$ that corresponds to $\gamma_i \in \gamma_G - \gamma$.
Set $\mathcal{I}\subset \{0,1,2,3 \}$, and denote by $F_\mathcal{I}$ the component of $F$ that meets the annuli $\Pi_i$ for $i\in \mathcal{I}$ and is disjoint from $\Pi_j$ for $j \not \in \mathcal{I}$.  

We may assume $\gamma_0$ does not correspond to an isolated annulus of $\Pi$.  In particular, either $\gamma = \gamma_0$ or $\gamma$ is a suture of the component $F_\mathcal{I} \times I$ of $P$ for which $0 \in \mathcal{I}$.  Consequently, if $0 \not \in \mathcal{I}$, then $\bdry F_{\mathcal{I}} \times [-1,1] = \bigcup_{i \in \mathcal{I}} \Pi_i= \bigcup_{i \in \mathcal{I}} \gamma_i$.


\begin{claim}
$F$ cannot have the components $F_2$ and $F_1$, $F_2$ and $F_3$, $F_{01}$, $F_{12}$, $F_{23}$,  $F_{03}$, $F_{13}$, $F_{123}$, $F_1$ and $F_3$, $F_{02}$.
\end{claim}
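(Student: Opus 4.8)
The plan is to reconstruct the two copies $R_+(\gamma)=S$ and $R_-(\gamma)=S$ of the Seifert surface from the pieces of the decomposition and read off a contradiction from three facts: $S$ is connected with the single boundary $K$, $R(\gamma)$ has no closed component, and $M\subset S^3$ contains no closed non-orientable surface. Write the four annuli of $\bdry G\setminus\gamma_G$ as $R_{01},R_{12},R_{23},R_{30}$ in cyclic order, labelled so that $\gamma_i$ separates $R_{i-1,i}$ from $R_{i,i+1}$; since the four longitudinal sutures alternate sign, $R_{01},R_{23}\subset R_+(\gamma_G)$ and $R_{12},R_{30}\subset R_-(\gamma_G)$. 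Each $\Pi_i$ meets $R_+(\gamma)$ in a curve $a_i^+$ and $R_-(\gamma)$ in a curve $a_i^-$, and reassembling $R_+(\gamma)=S$ glues each face $F_\mathcal{I}\times\{1\}$ to $R_{01}\cup R_{23}$ along the $a_i^+$, while $R_-(\gamma)=S$ glues $F_\mathcal{I}\times\{-1\}$ to $R_{12}\cup R_{30}$ along the $a_i^-$. The incidence forced by the labelling is the crux: $a_0^+,a_1^+$ lie on $R_{01}$ and $a_2^+,a_3^+$ on $R_{23}$, whereas $a_1^-,a_2^-$ lie on $R_{12}$ and $a_0^-,a_3^-$ on $R_{30}$.

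First I would dispose of the configurations that cap a guts annulus into a closed surface. For $F_{23}$ both $a_2^+,a_3^+$ bound $R_{23}$, so $R_{23}\cup(F_{23}\times\{1\})$ is closed; for $F_{12}$ both $a_1^-,a_2^-$ bound $R_{12}$, so $R_{12}\cup(F_{12}\times\{-1\})$ is closed; and when $F_1,F_2$ (resp.\ $F_2,F_3$) coexist the single annulus $R_{12}$ (resp.\ $R_{23}$) is capped on both sides by the two one-boundary faces. Each produces a closed component of $R_\mp(\gamma)$, impossible since $S$ has boundary $K$. Next, for $F_{01}$ the two curves $a_0^+,a_1^+$ again bound the single annulus $R_{01}$, so $R_{01}\cup(F_{01}\times\{1\})$ is a component of $R_+(\gamma)$ whose only remaining boundary is $K$; as $R_{23}$ meets the other faces, $R_+(\gamma)$ is disconnected, a contradiction, and $F_{03}$ is symmetric on the $R_-$ side.

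The heart of the argument is the \emph{antipodal} cases $F_{13}$, $F_{02}$ and $F_{123}$, which I would treat by an orientability obstruction in $S^3$. Orienting the sutures as $\bdry R_+$, the alternation of the four regions forces consecutive sutures to be anti-parallel and antipodal sutures ($\gamma_1,\gamma_3$ and $\gamma_0,\gamma_2$) to be parallel as longitudes of $G$. When a single face $F_\mathcal{I}\times\{0\}\subset P$ carries an antipodal pair $\gamma_i,\gamma_j$, their cores $c_i,c_j$ cobound an annulus $A$ properly embedded in the solid torus $G$ (any two longitudes do), meeting $F_\mathcal{I}\times\{0\}$ only along $c_i\cup c_j\subset\bdry G$; and $A$ identifies $c_i$ with $c_j$ by a map reversing their boundary orientations. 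Since the face induces the \emph{parallel} orientations on $c_i,c_j$ while $A$ induces anti-parallel ones, the loop running through the face from $c_i$ to $c_j$ and back through $A$ is orientation-reversing, so $N=(F_\mathcal{I}\times\{0\})\cup A$ is non-orientable along it. For $F_{13}$ the surface $N$ is already closed, giving the contradiction at once; for $F_{02}$ and $F_{123}$ one caps the remaining boundary ($K$, resp.\ the third core $c_2$) by any embedded surface in $S^3$, which leaves $w_1$ nonzero on the loop and produces a closed non-orientable surface in $S^3$. The same computation shows consecutive pairs yield tori rather than Klein bottles, consistent with their exclusion by the closed-component argument instead. Finally, $F_1,F_3$ coexisting reduces to the previous cases: the sutures not met by $F_1,F_3$ must lie on further faces, and the only options either produce the already-excluded face $F_{02}$ or reinstate $F_2$, giving all three of $F_1,F_2,F_3$ and hence an excluded adjacent coexistence.

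I expect the main obstacle to be the antipodal cases, and in particular $F_{123}$: making rigorous that the orientation of $S$ transported around the loop built from a face and an annulus in $G$ is reversing \emph{exactly} for antipodal sutures (the parallel-versus-anti-parallel bookkeeping above), and checking that the auxiliary capping surfaces for $F_{02}$ and $F_{123}$ can be chosen embedded and disjoint from $N$, so that the resulting non-orientable surface is genuinely embedded in $S^3$.
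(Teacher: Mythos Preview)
Your strategy matches the paper's for the consecutive-suture configurations (closed component of $R_\pm(\gamma)$) and for $F_{13}$ (closed non-orientable surface), though you phrase the $F_{01},F_{03}$ cases as ``$R_+(\gamma)$ disconnected'' rather than ``$S$ has a closed component''; these are equivalent since $S$ is connected with a single boundary curve.

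Where you diverge is in the remaining antipodal cases. For $F_1\,\&\,F_3$ and $F_{02}$ the paper argues mutually: given $F_1$ and $F_3$, it builds the \emph{closed} surface $\Sigma=(F_1\times\{0\})\cup A_{13}\cup(F_3\times\{0\})$ and then, observing that the forced companion $F_{02}$ supplies an arc from $\gamma_0$ to $\gamma_2$ which closes up through $G$ to a loop meeting $\Sigma$ transversally once, concludes $\Sigma$ is non-separating in $S^3$ --- a contradiction requiring no orientability bookkeeping. Then $F_{02}$ is dispatched by reduction to $F_1\,\&\,F_3$ or $F_{13}$. For $F_{123}$ the paper uses \emph{two} copies $F_{123}\times\{\pm1\}$ together with the three annuli $A_{12},A_{23},A_{13}$, obtaining a closed surface directly with no capping step.

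Your uniform ``one face plus one antipodal annulus, then cap the leftover boundary'' approach is attractive, but the capping is exactly where it is incomplete --- and you rightly flag this. Saying ``cap by any embedded surface in $S^3$'' is not enough: you must produce a cap meeting your non-orientable piece $N$ only along $\partial N$, and since $N$ is one-sided in $S^3$ you cannot simply surger away generic intersections. In fact suitable caps do exist if chosen with care: for $F_{02}$, the original Seifert surface $S$ itself (lying in $S^3\setminus\operatorname{int}M$) caps $\partial N\subset\gamma$ without touching $N\subset M$; for $F_{123}$, one may cap $c_2$ by $A_{02}\cup(F_0\times\{0\})\cup S$, routing through the non-isolated product component and the original $S$. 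With those specific caps your argument goes through. The paper's constructions (the non-separating loop for $F_1\,\&\,F_3$; the doubled $F_{123}$) sidestep this issue entirely and are somewhat cleaner to verify.
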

\begin{proof}
Observe that if $F$ contains components  $F_2$ and either $F_1$ or $F_3$, then one observes that either $R_+(\gamma)$ or $R_-(\gamma)$, and hence $S$, has a closed component. This contradicts that $S$ is a Seifert surface.  Similarly  $F$ cannot have a component $F_{01}$, $F_{12}$, $F_{23}$, or $F_{03}$, since otherwise $S$ would have a closed component.

Let $A_{ij}$ be a properly embedded annulus in $G$ joining essential loops in $\gamma_i$ and $\gamma_j$.
If $F$ were to have a component $F_{13}$, then the union of $F_{13} \times \{0\}$ with $A_{13}$ would produce a closed non-orientable surface embedded in $S^3$, a contradiction.
Similarly, if $F$ were to have a component $F_{123}$, then the union of $F_{123} \times \{-1, +1\}$ with annuli $A_{12}$, $A_{23}$, and $A_{13}$ would produce a closed non-orientable surface embedded in $S^3$, a contradiction.

If $F$ were to have components $F_1$ and $F_3$, then the union $F_1\times \{0\} \cup A_{13} \cup F_3 \times \{0\}$ would be an embedded closed surface $\Sigma$ in $S^3$.  Since we have already seen that $F$ cannot also have $F_2$ as a component, it must have $F_{02}$ as a component.  But then a path in $F_{02} \times \{0\}$ from $\gamma_0$ to $\gamma_2$ would complete with an arc in $G$ to a loop that transversally intersects $\Sigma$ just once.  Hence $\Sigma$ would be a non-separating surface in $S^3$, a contradiction.

Finally, if $F$ had the component $F_{02}$, then it must either have components $F_1$ and $F_3$ or the component $F_{13}$. But these have been ruled out.

\end{proof}

From this claim, it then follows that $\vert F \vert \leq 2$. If $\vert F \vert = 1$, then $F = F_{0123}$ and the guts are non-isolating. If $\vert F \vert =2$, 
then $F = F_i \sqcup F_{0jk}$ for $\{i,j,k\} = \{1,2,3\}$.
These cases are all equivalent up to renumbering since $F_{0jk}$ meets $\gamma$, and in each case $\Pi$ has exactly one isolated product annulus.
\end{proof}

\section{Handle numbers}


For definitions and details about Heegaard splittings and handle numbers of sutured manifolds see \cite{BMG}.  We briefly overview them here.

Let $(M,\gamma)$ be a sutured manifold without toroidal sutures.  In what follows, we may suppress the sutures from the notation. Recall that compression bodies may be viewed as sutured manifolds.
A {\em  Heegaard splitting} of $M$, denoted $(M,\Sigma)$ or $(A,B;\Sigma)$, is a decomposition of $M$ into a compression body $A$ and dual compression body $B$ along a surface $\Sigma = R_+(A) = R_-(B)$.  (In these compression bodies, $R_-(A)$ and $R_+(B)$ are incompressible or empty.) 
Every sutured manifold $M$ without toroidal sutures admits a Heegaard splitting. 
The {\em handle number} of a Heegaard splitting $(M,\Sigma) = (A,B;\Sigma)$ of a sutured manifold $M$ is $h(M,\Sigma) = h(A)+h(B)$.
The {\em handle number} 
$h(M,\gamma)$ of the sutured manifold $M$ is the minimum of $h(M,\Sigma)$ among all Heegaard splittings of $(M,\gamma)$.
Observe that this is the minimum number of $0$--, $1$--, $2$--, and $3$--handles needed to construct $M$ from a collar of $R_-(M)$.  
In the case of a product sutured manifold $M = F\times [-1,1]$ where $R_{\pm}(M) = F \times \pm1$, the Heegaard surface $F \times 0$ gives a Heegaard splitting of $M$ for which the compression bodies $A$ and $B$ are trivial. Thus its  handle number is zero.  Indeed, the handle number of a sutured manifold is $0$ exactly when the sutured manifold is a  product.

\subsection{Handle number of the guts pulls back}

It turns out that Haken's Theorem about Heegaard splittings and reducing spheres \cite{Haken} and its extension to boundary-reducing disks \cite{CG,BO} generalizes to product disks and annuli in sutured manifolds.

\begin{theorem}\label{thm:haken}


    Let $Q$ be an incompressible product annulus or product disk in a sutured manifold $(M,\gamma)$. Then for any Heegaard surface $\Sigma$ of $(M,\gamma)$,  there is a product annulus or disk $Q'$ with $\bdry Q = \bdry Q'$ so that $Q' \cap \Sigma$ is a single horizontal curve in $Q'$.  If $(M,\gamma)$ is irreducible, then $Q$ is isotopic rel-$\bdry$ to $Q'$. 
\end{theorem}

Our proof follows Jaco's proof of Haken's Theorem \cite[II.7]{jaco} closely.  We use his {\em Type-A} isotopies presented there.  However we need a slight specialization of his hierarchy which we now review.

Recall that a {\em hierarchy} for a compact surface $T$ is a sequence of pairs 
\[ (T_0, \alpha_0), \dots, (T_n, \alpha_n)\]
where $T=T_0$, each $\alpha_i$ is an essential properly embedded arc or loop in $T_i$,  $T_{i+1} = T_i \cut \alpha_i$, and $T_{n+1}$ is a union of disks.  Jaco proves the following combinatorial lemma.

\begin{lemma}[{\cite[II.9.Lemmma]{jaco}}]
\label{lem:JacoLemma}
Let $T$ be a planar surface with $b$ boundary components and $z\geq 0$ components that are not disks.
Let $(T_0, \alpha_0), \dots, (T_n, \alpha_n)$ be any hierarchy in which each $\alpha_i$ is an arc.  If $T_{n+1}$ is a union of $d$ disks, then $d \leq b-z$.  \qed
\end{lemma}

Let us slightly modify this for our purposes.  For a compact surface $T$ with a specified boundary component $C$, a {\em $C$--protected hierarchy} for $T$ is a hierarchy $(T_0, \alpha_0), \dots, (T_n, \alpha_n)$ in which only $\alpha_n$ meets $C$.  In particular $T_n$ is a union of disks and one annulus, $C$ is a boundary component of that annulus, and $\alpha_n$ is a spanning arc of that annulus (and hence $\alpha_n$ is essential in the annulus). Observe then, that $|T_n|=|T_{n+1}|$ since $\alpha_n$ is non-separating. 

\begin{proof}
We prove this theorem explicitly only in the case that $Q$ is an incompressible product annulus.  The case for product disks is even simpler and closer to Jaco's proof of Haken's Lemma. 
Goda proves this for product disks (and gives a generalization for disks crossing the sutures more times) in \cite[Proposition 3.1]{goda-murasugi} when $M$ is irreducible.

Let $\Sigma$ be a Heegaard surface dividing $(M,\gamma)$ into compression bodies $A,B$.  
Among properly embedded annuli with the same boundary as $Q$, assume $Q$ is one that minimizes  $|Q \cap \Sigma|$.  If $M$ is irreducible, instead isotop the annulus $Q$ to minimize $|Q \cap \Sigma|$.  

{\bf Claim 1.}  Both $Q \cap A$ and $Q \cap B$ are incompressible in $A$ and $B$ respectively.  

Suppose to the contrary that, say, $Q \cap A$ admits a compressing disk $\delta_A$ in $A$.   Since $Q$ is incompressible, $\bdry \delta_A$ must bound a disk $\delta_Q$ in $Q$.  Yet since $\delta_A$ is a compressing disk for $Q \cap A$, $\bdry \delta_A$ does not bound a disk in $Q \cap A$. Hence $\delta_Q$ must intersect $\Sigma$ in its interior.  
Then $Q'=(Q-\delta_Q) \cup \delta_A$ is then an incompressible product annulus with the same boundary as $Q$, and a slight isotopy of this annulus off $\delta_A \subset A$ has fewer intersections with $\Sigma$ than $Q$.   If $M$ is irreducible,  then the sphere  $\delta_A \cup \delta_Q$ bounds a ball and $\delta_Q$ may be isotoped to $\delta_A$ through this ball and hence  $Q'$ is actually isotopic to $Q$.

{\bf Claim 2.}   $Q$ intersects $\Sigma$ in a single curve.

Assume for contradiction that $|Q \cap \Sigma| > 1$.

Suppose $Q \cap A$ is not a union of disks and one annulus.  Set $T=Q \cap A$. As it is a subsurface of $Q$, $T$ must also be planar. Furthermore, observe that $|Q \cap \Sigma| = |\bdry T|-1$. 

Among complete systems of meridional disks for $A$, let $\Delta_A$ be one that minimizes $|\Delta_A \cap T|$.  Note that $\bdry \Delta_A \cap (Q \cap \bdry_-A)=\emptyset$.
Since $T$ is incompressible by Claim 1, we may assume $T \cap \Delta_A$ is a union of arcs.  By the assumed minimality of $|\Delta_A \cap T|$, the arcs are essential in $T$.

 These arcs define a partial hierarchy $(T_0, \alpha_0), \dots, (T_k, \alpha_k)$ of $T=T_0$ where each $\alpha_i$ bounds a subdisk $\delta_i$ of $\Delta_A$ that is disjoint from $\alpha_j$ for $j>i$.
Successively performing Type-A isotopies on $Q$ along the subdisks $\delta_i$ isotops the annulus $Q=Q_0$ through a sequence of surfaces $Q_i$ with $Q_i \cap A = T_i$ to a surface $Q_{k+1}$ with $Q_{k+1} \cap A = T_{k+1}$ that is disjoint from $\Delta_A$.

Since $A \cut \Delta_A$ is a product $\bdry_- A \times I$ and $T_{k+1}$ is an incompressible surface properly embedded in $A \cut \Delta_A$ meeting $\bdry_-A$ in a single curve, $T_{k+1}$ is a union of a single spanning annulus $Q_A$ and a collection of planar surfaces that are $\bdry$--parallel into $\bdry_-A \times \{+1\}$.
These $\bdry$--parallelisms induce a sequence of Type-A isotopies along arcs $\alpha_{k+1}, \dots, \alpha_{n-1}$  that reduce the collection of planar surfaces to parallel copies of images components of $\Delta_A$ in $A \cut \Delta_A$. These Type-A isotopies continue the isotopy of $Q$ from $Q_{k+1}$ to $Q_{n}$ rendering $T_{k+1}$ into $T_n = Q_n \cap A$, the union of the annulus $Q_A$ and a collection of disks. Observe that $|T_n|=|Q_n \cap \Sigma|$.

Letting $\alpha_n$ be a spanning arc of the annulus $Q_A$, this defines a $C$--protected hierarchy for $T$ where $C = Q \cap \bdry_-A$.  Since $T$ is a planar surface and the hierarchy uses only arcs, Lemma~\ref{lem:JacoLemma} implies that $|T_n| \leq |\bdry T|-z$ where $z$ is the number of components of $T$ that are not disks.  By assumption, $z\geq 2$.  Thus 
\[|Q_n \cap \Sigma| = |T_n| \leq |\bdry T|-z \leq |\bdry T|-2 = |Q \cap \Sigma|-1\]
which contradicts the presumed minimality of $|Q \cap \Sigma|$ since $Q$ is isotopic to $Q_n$.

Thus $Q \cap A$ must be a union of a single annulus and some number of disks.
As the same argument also shows that $Q \cap B$ must be a union of a single annulus and some number of disks,
we conclude that in fact $Q \cap A$ and $Q \cap B$ are each single annuli since $Q$ is an annulus itself. This establishes Claim 2 and proves the theorem.  
\end{proof}

\begin{theorem}\label{thm:handlenumberofgutsishandlenumberofmfld}
    Let $(M,\gamma)$ be an irreducible sutured manifold with non-empty guts $(G, \gamma_G)$. Then $h(M,\gamma)= h(G,\gamma_G)$. 

    
\end{theorem}

\begin{proof}
Let $\Pi$ be a minimal reducing product decomposition surface for $(M, \gamma)$ giving the decomposition
\[(M,\gamma) \overset{\Pi}{\leadsto} (P, \gamma_P) \cup (G, \gamma_G)\]
into a product sutured manifold $(P, \gamma_P)$ and the guts $(G, \gamma_G)$.
Since $(P,\gamma_P)$ is a product manifold, $h(P,\gamma_P)=0$. Thus $h(M,\gamma) \leq h(P, \gamma_P) + h(G,\gamma_G) = h(G,\gamma_G)$ where the inequality is due to \cite[Lemma 2.8]{BMG}.

On the other hand if $\Sigma$ is a Heegaard surface for $(M,\gamma)$ realizing $h(M,\gamma)$.  Then Theorem~\ref{thm:haken} implies that $\Pi$ may be isotoped so that each component of $\Pi$ intersects $\Sigma$ in a single horizontal curve.  Hence $\Sigma$ restricts to a Heegaard surface for $(P, \gamma_P)$ and $G, \gamma_G)$.  Therefore $h(M,\gamma) \geq h(P,\gamma_P) + h(G,\gamma_G) = h(G,\gamma_G)$.

Hence it follows that  $h(M,\gamma) = h(G,\gamma_G)$.
\end{proof}

\begin{figure}
    \centering
    \includegraphics[width=.9\textwidth]{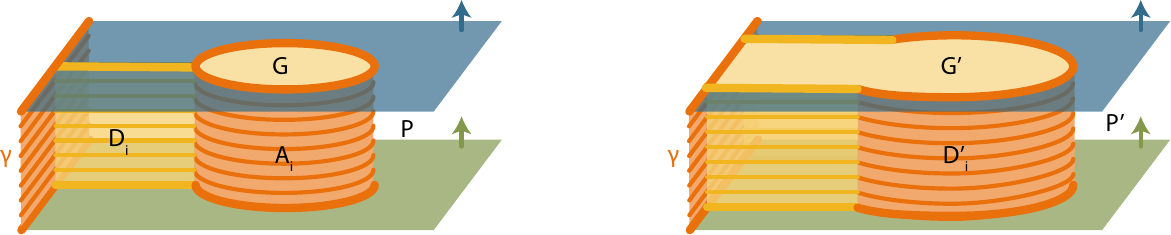}   
    \caption{Using a product disk $D_i$, a non-isolating product annulus $A_i$ may be traded for a product disk $D_i'$.}
    \label{fig:tradingannulusfordisk}
\end{figure}


\subsection{On sutured manifolds derived from bridge positions} 
\label{sec:minimizinglinhandlenumbers}
Given a compact connected oriented $3$-manifold $M$ where $\bdry M$ is a single torus, let $(M,\gamma_{\sigma,2n})$ be the sutured manifold structure consisting of $2n$ sutures of slope $\sigma$ in $\bdry M$ where $n$ is a positive integer.
Let $K_\sigma$ denote a knot in a closed $3$-manifold $M_\sigma$ with exterior $M$ and meridian $\sigma$ so that $M_\sigma$ is the Dehn filling of $M$ along the slope $\sigma$ and $K_\sigma$ is the core of that filling. Recall that a $(g,b)$--position of $K_\sigma$ is a division of $K_\sigma$ by a genus $g$ Heegaard surface of $M_\sigma$ into $b$ mutually boundary-parallel properly embedded arcs in each of the two handlebodies of the Heegaard splitting, \cite{Dollgbposition}.

\begin{lemma}\label{lem:easygbbbounds}
    A $(g,b)$--position of $K_\sigma$ in $M_\sigma$ induces the sutured manifold structure $(M,\gamma_{\sigma,2b})$ with handle number 
    \[ 2(b-1) \leq h(M,\gamma_{\sigma,2b}) \leq 2(g+b-1).\]
    In particular, if  $K_\sigma$ has a $(0,b)$--position (so that $M_\sigma=S^3$), then $h(M,\gamma_{\sigma,2b}) = 2(b-1)$. 
\end{lemma}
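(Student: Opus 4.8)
The plan is to establish the two inequalities by separate arguments and then read off the genus-zero equality. First I would pin down the induced sutured structure. A $(g,b)$-position presents $M_\sigma$ as $H_1 \cup_{\Sigma_g} H_2$ with each $H_i$ a genus $g$ handlebody carrying $b$ trivial arcs of $K_\sigma$. Drilling $K_\sigma$ replaces $\Sigma_g$ by the punctured surface $\Sigma := \Sigma_g \setminus \nbhd(K_\sigma)$, a genus $g$ surface with $2b$ boundary circles, and cuts the torus $\bdry M$ along the $2b$ meridians $\Sigma_g \cap \bdry M$ into $2b$ annuli that alternate between the $H_1$-side and the $H_2$-side. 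Taking the $b$ lower annuli as $R_-(\gamma)$, the $b$ upper annuli as $R_+(\gamma)$, and thin collars of the $2b$ meridians as the sutures recovers exactly $(M, \gamma_{\sigma, 2b})$, and in particular $\chi(R_\pm(\gamma)) = 0$.

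For the upper bound I would turn this picture into an explicit Heegaard splitting. Set $A = H_1 \setminus \nbhd(K_\sigma)$ and $B = H_2 \setminus \nbhd(K_\sigma)$, meeting along $\Sigma$. Because each arc is trivial, drilling it turns $H_i$ into a compression body, so that $A$ has incompressible lower boundary $R_-(A) = R_-(\gamma)$, $B$ has $R_+(B) = R_+(\gamma)$, and $R_+(A) = R_-(B) = \Sigma$; thus $(A, B; \Sigma)$ is a genuine Heegaard splitting of $(M, \gamma_{\sigma,2b})$. Using that the handle number of a compression body built on $R_- \times I$ equals $\tfrac12(\chi(R_-) - \chi(R_+))$, I get $h(A) = h(B) = \tfrac12(0 - \chi(\Sigma)) = g + b - 1$, whence $h(M, \gamma_{\sigma, 2b}) \le h(A) + h(B) = 2(g+b-1)$.

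For the lower bound I would test an arbitrary Heegaard splitting $(A, B; \Sigma')$ realizing $h(M, \gamma_{\sigma, 2b})$. Counting the $0$- and $1$-handles of each compression body, and using $\chi(R_\pm(\gamma)) = 0$, gives $\chi(\Sigma') = 2(n_0 - n_1)$, hence $h(A) \ge n_1 - n_0 = -\tfrac12\chi(\Sigma')$ and likewise for $B$, so $h(M, \gamma_{\sigma,2b}) \ge -\chi(\Sigma')$. It then remains to show $-\chi(\Sigma') \ge 2(b-1)$. The boundary $\bdry\Sigma'$ consists of the $2b$ suture cores, which are meridians of $K_\sigma$ and so essential in $M$; irreducibility lets me discard any sphere and disk components, and for a connected $\Sigma'$ the count $\chi(\Sigma') = 2 - 2\mathfrak{g} - 2b \le 2 - 2b$ then yields $-\chi(\Sigma') \ge 2(b-1)$. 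The hard part is exactly the reduction to a \emph{connected} Heegaard surface: a disconnected $\Sigma'$ (for instance a union of $b$ annuli) would render the Euler-characteristic estimate vacuous, so I must exclude it, either by amalgamating a disconnected splitting without raising the handle number, or by the finer bookkeeping that a connected $M$ forces each of $A$ and $B$ to spend at least $b-1$ connecting $1$-handles to join the $b$ components of $R_\mp(\gamma)$.

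Finally the special case follows at once. A $(0,b)$-position presents $M_\sigma$ by a genus-$0$ splitting, so $M_\sigma = S^3$, and substituting $g = 0$ into the two bounds gives $2(b-1) \le h(M, \gamma_{\sigma, 2b}) \le 2(b-1)$, forcing $h(M, \gamma_{\sigma, 2b}) = 2(b-1)$.
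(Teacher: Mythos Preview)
Your argument is correct and, for the upper bound and the induced sutured structure, essentially identical to the paper's: the paper also restricts the genus-$g$ splitting to $M$, obtains compression bodies $A,B$ with Heegaard surface $S$, and computes $h(A)=h(B)=g+b-1$ (by explicit handle counting rather than your Euler-characteristic formula, but the content is the same).

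For the lower bound you take a detour the paper avoids. The paper does not estimate $-\chi(\Sigma')$ at all; it goes directly to what you list as your second alternative at the end: since $M$ is connected and is built from the $b$ components of $R_-(\gamma)\times I$ by $0$-, $1$-, $2$-, $3$-handles, and only $1$-handles can reduce the component count, at least $b-1$ of them are needed in $A$, and dually in $B$, giving $h\geq 2(b-1)$ in one line. Your primary $\chi(\Sigma')$ route, by contrast, genuinely needs the connectedness of $\Sigma'$ that you flag as the ``hard part'' and leave unresolved; so your proposal only becomes a complete proof once you commit to that final ``finer bookkeeping'' alternative --- which is precisely the paper's argument.
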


\begin{proof}
    Let $\hat{S}$ be genus $g$ Heegaard surface for $M_\sigma$ giving a $(g,b)$--position of $K_\sigma$.  Thus $\hat{S}$ splits $M_\sigma$ into handlebodies $\hat{A}$ and $\hat{B}$ for which $\hat{A} \cap K_\sigma$ and $\hat{B} \cap K_\sigma$ are each $b$ boundary parallel arcs.   Let $S = M \cap \hat{S}$, $A=M \cap \hat{A}$, and $B = M \cap \hat{B}$. Then $\bdry S$ is a collection of $2b$ curves of slope $\sigma$ in $\bdry M$ that alternate in orientation, inducing the sutured manifold structure $(M, \gamma_{\sigma, 2b})$.  Furthermore $(S;A,B)$ is a Heegaard splitting for this sutured manifold.  Since $\hat{S}$  compresses along $g$ disks in $\hat{A}$ to yield a ball containing the boundary parallel arcs $\hat{A} \cap K_\sigma$, a further $b-1$ compressions yield $b$ balls each containing a single unknotted arc of $\hat{A} \cap K_\sigma$. Thus the compression body $A$ has handle number $h(A)=g+b-1$.  Similarly $h(B)=g+b-1$ so that $h(S;A,B)=2(g+b-1)$.  Therefore we have the upper bound $h(M, \gamma_{\sigma, 2b}) \leq 2(g+b-1)$.

    Since the $b$ annuli of each $R_+(\gamma_{\sigma, 2b})$ and $R_-(\gamma_{\sigma, 2b})$ must be joined together for a Heegaard splitting of $(M, \gamma_{\sigma, 2b})$, we have the lower bound 
    $2(b-1) \leq h(M, \gamma_{\sigma, 2b})$.
\end{proof}

\begin{lemma}\label{lem:handletoposn}
Consider the sutured manifold $(M,\gamma_{\sigma,2n})$ and knot $K_\sigma$ in the Dehn filling $M_\sigma$ as above.
If $h=h(M,\gamma_{\sigma,2n})$, then the Heegaard genus of $M_\sigma$ is at most $h/2+1-n$ and $K_\sigma$ has a $(h/2+1-n, n)$--position in $M_\sigma$.   

In particular, if $n=1$  then  $g(M_\sigma) \leq h/2$ and $K_\sigma$ has a $(h/2,1)$--position in $M_\sigma$.
\end{lemma}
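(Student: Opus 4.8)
The plan is to take a minimal Heegaard splitting of the sutured manifold and cap it off with meridian disks of the filling solid torus, turning the two compression bodies into handlebodies and exhibiting $K_\sigma$ as a collection of trivial arcs. First I would fix a Heegaard splitting $(A,B;\Sigma)$ of $(M,\gamma_{\sigma,2n})$ realizing $h=h(A)+h(B)$; by minimality we may take $\Sigma$ connected (so that $A$ and $B$ are connected) and neither compression body to have a closed component. Since $R_+(\gamma_{\sigma,2n})$ and $R_-(\gamma_{\sigma,2n})$ each consist of $n$ annuli, both have zero Euler characteristic, so the compression-body relations $\chi(\Sigma)=\chi(R_-(\gamma_{\sigma,2n}))-2h(A)$ and $\chi(\Sigma)=\chi(R_+(\gamma_{\sigma,2n}))-2h(B)$ give $\chi(\Sigma)=-2h(A)=-2h(B)$. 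Hence $h$ is even, $h(A)=h(B)=h/2$, and $\chi(\Sigma)=-h$.

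Next I would Dehn fill $M$ along $\sigma$ to obtain $M_\sigma$, with $K_\sigma$ the core of the filling solid torus $V$. The $2n$ curves of $\partial\Sigma$ have slope $\sigma$ and so bound $2n$ parallel meridian disks $E_1,\dots,E_{2n}$ of $V$; these cut $V$ into $2n$ three-ball slabs whose lateral annuli are the $2n$ components of $R_+(\gamma_{\sigma,2n})\cup R_-(\gamma_{\sigma,2n})$, and $K_\sigma$ crosses each slab in a single arc. I attach the $n$ slabs meeting $R_-(\gamma_{\sigma,2n})$ to $A$ and the $n$ slabs meeting $R_+(\gamma_{\sigma,2n})$ to $B$, forming $\hat A$ and $\hat B$ glued along the closed surface $\hat S=\Sigma\cup E_1\cup\dots\cup E_{2n}$. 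Capping $2n$ boundary circles of $\Sigma$ with disks gives $\chi(\hat S)=\chi(\Sigma)+2n=2n-h$, so $\hat S$ has genus $g=1-n-\chi(\Sigma)/2=h/2+1-n$.

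I would then verify that $\hat A$ and $\hat B$ are handlebodies. Attaching a slab to $A$ along an $R_-$ annulus is exactly attaching a $2$--handle along the core of that annulus (a curve of slope $\sigma$). As the collar of an $R_-$ annulus is a solid torus and that core is a longitude of it, each solid-torus-plus-$2$--handle is a $3$--ball; thus $\hat A$ is $n$ balls joined by the $h(A)$ one-handles of $A$, a handlebody of genus $h(A)-(n-1)=h/2+1-n=g$. Here the lower bound $h\ge 2(n-1)$, which comes from the same counting as in Lemma~\ref{lem:easygbbbounds} (the $n$ annuli of each $R_\pm$ must be joined), guarantees this genus is non-negative. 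The same holds for $\hat B$, so $\hat S$ is a genus-$g$ Heegaard surface of $M_\sigma$ and $g(M_\sigma)\le h/2+1-n$.

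Finally, $K_\sigma$ meets $\hat A$ in the $n$ core arcs of the slabs attached to $A$, one lying in each of the disjoint balls $B_i$, with endpoints on disks $E_j\subset\hat S$. An arc properly embedded in a $3$--ball is boundary parallel, and a bridge disk can be routed within $B_i$ to an arc of $\partial B_i\cap\hat S$, since the latter is $\partial B_i$ minus the finitely many one-handle feet and hence connected; as the balls are pairwise disjoint, these bridge disks are disjoint and the arcs are mutually boundary parallel, and likewise for $\hat B$. Thus $K_\sigma$ is in $(g,n)$--position, and taking $n=1$ yields the stated special case. I expect the main obstacle to be exactly this last step: a core arc is a co-core of a $2$--handle, which in a general compression body need not be boundary parallel, so the key point is that isolating each arc inside its own $0$--handle ball is what makes the required disjoint bridge disks available.
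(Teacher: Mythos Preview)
Your argument is correct and follows essentially the same approach as the paper: take a minimal sutured Heegaard splitting, cap it off with meridian disks of the filling solid torus, and read off a $(g,n)$--position for $K_\sigma$. The paper compresses the verification by observing directly that a compression body built from $n$ thickened annuli by $h/2$ one-handles is the exterior of an $n$--strand trivial tangle in a genus $h/2+1-n$ handlebody, whereas you Dehn fill first and then check separately (via Euler characteristic and the slab--plus--collar description) that $\hat A$, $\hat B$ are genus $h/2+1-n$ handlebodies and that the core arcs admit disjoint bridge disks; the content is the same.
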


\begin{proof} 
If $h=h(M,\gamma_{\sigma,2n})$, then $(M,\gamma_{\sigma,2n})$ has a Heegaard splitting $(S;A,B)$ in which $A$ and $B$ are connected compression bodies with $h(A) = h(B) = h/2$, $S = \bdry_+ A = \bdry_- B$, and each $R_+ = \bdry_+ B$ and $R_- = \bdry_- A$ consist of $n$ annuli. Observe this also means that each $A$ and $B$ have $2n$ annular sutures. As $n-1$ $1$--handles of $A$ are required to connect thickenings of the $n$ annuli $R_-=\bdry_- A$, the remaining contribute to the genus of $S = \bdry_+ A$.  Hence $g(S) = h/2 + 1-n$.  
Furthermore, observing that the sutured manifold of $n$ thickened annuli connected by $n-1$ $1$--handles is homeomorphic to the exterior of the trivial $n$--strand tangle in the ball, attaching another $h/2 + 1-n$ $1$--handles yields a sutured manifold homeomorphic to the exterior of $n$ mutually $\bdry$--parallel arcs in a genus $h/2+1-n$ handlebody.  Thus Dehn filling $\bdry M$ along the slope $\sigma$ of $\gamma$ induces a splitting $(\hat{S};\hat{A},\hat{B})$ of $M_\sigma$ in which $\hat{S}$ is the surface $S$ capped off with $2n$ disks and $\hat{A}$ and $\hat{B}$ are the handlebodies obtained by attaching thickened disks to $\bdry A \cut S$ and $\bdry B \cut S$.  The cores of these thickened disks join together to form $K_\sigma$.  Hence this splitting demonstrates that the Heegaard genus of $M_\sigma$ is at most $h/2+1-n$ and $K_\sigma$ has a $(h/2+1-n, n)$--position in $M_\sigma$.
\end{proof}

\begin{example}\label{exa:solidtorusmeridian}
Let $M$ be the solid torus with meridian $\mu$.  When $\Delta(\sigma,\mu)=1$, the sutures are longitudes so $(M,\gamma_{\sigma, 2})$ is a product, so $h(M,\gamma_{\sigma, 2})=0$.
When $\Delta(\sigma,\mu)\neq 1$, $M_\sigma$ is a non-trivial lens space so $g(M_\sigma)=1$. So Lemma~\ref{lem:handletoposn} says that $h(M,\gamma_{\sigma, 2}) \geq 2$.  However, as $K_\sigma$ is a core of one of the Heegaard solid tori, it has a $(1,1)$--position in the lens sapce.  Thus Lemma~\ref{lem:easygbbbounds} implies that $h(M,\gamma_{\sigma, 2}) \leq 2$. 

Then, we have
 \[h(M,\gamma_{\sigma, 2}) = \begin{cases}
 0 & \mbox{ if } \Delta(\sigma,\mu)=1 \\ 
 2 & \mbox{ otherwise}.
 \end{cases}
 \]
\end{example}

\begin{example}\label{exa:torusknotext}
Let $M$ be the exterior of a non-trivial torus knot. 
Since a torus knot has tunnel number $1$, $K_\sigma$ has a $(2,1)$--position in $M_\sigma$ for any slope $\sigma$.  So Lemma~\ref{lem:easygbbbounds} says that $h(M,\gamma_{\sigma, 2}) \leq 4$. 

Let $\rho$ be the slope of a Seifert fiber.   Then, by the classification of Dehn surgeries on torus knots \cite{moser}, $M_\sigma$ is a lens space if $\Delta(\sigma,\rho) = 1$ and $M_\sigma$ is either a small Seifert fibered space or a connected sum of lens spaces if $\Delta(\sigma,\rho)\neq 1$.  In the latter case, $g(M_\sigma)=2$ and Lemma~\ref{lem:handletoposn} says that $h(M,\gamma_{\sigma, 2}) \geq 4$ so that $h(M,\gamma_{\sigma, 2}) = 4$.  However in the former case $g(M_\sigma)=1$ and $K_\sigma$ has a $(1,1)$--position.  So  Lemma~\ref{lem:handletoposn} and Lemma~\ref{lem:easygbbbounds} imply that $h(M,\gamma_{\sigma, 2})=2$.

 In summary, we have
 \[h(M,\gamma_{\sigma, 2}) = \begin{cases}
 2 & \mbox{ if } \Delta(\sigma,\rho)=1 \\ 
 4 & \mbox{ otherwise}.
 \end{cases}
 \]
 \end{example}

Observe that if a knot has a $(g,b)$--position with $b\geq 2$, then a bridge stabilization of the knot gives a $(g,b+1)$--position while a meridional stabilization of the Heegaard surface (i.e.\ a Heegaard stabilization along a bridge arc of $K_\sigma$) gives a $(g+1,b-1)$--position. See, e.g.\ \cite{MMS-gb,saito-gb}.
 
\begin{lemma}\label{lem:parallelsutures}
$h(M,\gamma_{\sigma,2n}) \leq h(M,\gamma_{\sigma,2(n+1)}) \leq h(M,\gamma_{\sigma,2n}) +2$
\end{lemma}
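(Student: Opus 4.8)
The plan is to route everything through the dictionary between handle numbers and bridge positions supplied by Lemmas~\ref{lem:handletoposn} and~\ref{lem:easygbbbounds}, and to realize the passage from $2n$ to $2(n+1)$ sutures by a single stabilization of a bridge presentation of $K_\sigma$. Since $\bdry M$ is a single torus, the filling $M_\sigma$ and its core knot $K_\sigma$ are defined for the slope $\sigma$, so both position lemmas apply for every $n\geq 1$. The two inequalities will come from the two stabilization moves described just above: a bridge stabilization (raising $b$) for the upper bound, and a meridional stabilization (lowering $b$ while raising $g$) for the lower bound.

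For the upper bound I would set $h=h(M,\gamma_{\sigma,2n})$. Lemma~\ref{lem:handletoposn} produces a $(g,n)$-position of $K_\sigma$ in $M_\sigma$ with $g=h/2+1-n$, and the lower bound $2(n-1)\leq h$ from Lemma~\ref{lem:easygbbbounds} guarantees $g\geq 0$. A bridge stabilization of this position yields a $(g,n+1)$-position, and feeding this back through the upper bound of Lemma~\ref{lem:easygbbbounds} gives
\[ h(M,\gamma_{\sigma,2(n+1)}) \leq 2\bigl(g+(n+1)-1\bigr) = 2g+2n = h+2, \]
which is exactly the right-hand inequality.

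For the lower bound I would set $h'=h(M,\gamma_{\sigma,2(n+1)})$. Applying Lemma~\ref{lem:handletoposn} with $n+1$ in place of $n$ gives a $(g',n+1)$-position with $g'=h'/2-n$, and the bound $2n\leq h'$ forces $g'\geq 0$. Because $n+1\geq 2$, a meridional stabilization is available and produces a $(g'+1,n)$-position; Lemma~\ref{lem:easygbbbounds} then yields
\[ h(M,\gamma_{\sigma,2n}) \leq 2\bigl((g'+1)+n-1\bigr) = 2(g'+n) = h', \]
giving the left-hand inequality and completing the argument.

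The essential work is the translation through the position lemmas; the remaining care is purely in checking that the stabilization moves apply, and this is the step I expect to be the main (albeit minor) obstacle. For the upper bound the bridge stabilization starts from $b=n\geq 1$, so I would note that adding a trivial bridge is always possible and that the restriction $b\geq 2$ recorded with the stabilization remark is needed only for the meridional move; for the lower bound the meridional stabilization starts from $b=n+1\geq 2$, exactly as required. I would also record that the genus parameters $g$ and $g'$ are nonnegative (as shown above) and that $h/2$ is integral, the latter being built into the balanced minimal splittings provided by Lemma~\ref{lem:handletoposn}. Since the two position lemmas and the two stabilization moves are dual, each shift of $n$ by one costs precisely two handles in one direction and none in the other, so no deeper difficulty should arise.
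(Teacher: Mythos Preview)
Your proposal is correct and follows essentially the same approach as the paper: both argue by translating handle numbers into $(g,b)$-positions via Lemma~\ref{lem:handletoposn}, performing a bridge stabilization for the upper bound and a meridional stabilization for the lower bound, and then translating back via Lemma~\ref{lem:easygbbbounds}. Your additional remarks checking that the stabilization moves are available and that the genus parameters are nonnegative are helpful clarifications not spelled out in the paper's version.
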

\begin{proof}
Let $h_n = h(M,\gamma_{\sigma,2n})$ for positive integers $n$.
Lemma~\ref{lem:handletoposn} shows that $(M,\gamma_{\sigma,2n})$ is realized by a $(h_n/2+1-n,n)$--position of the associated knot $K_\sigma$.
A bridge stabilization of this position gives a  $(h_n/2+1-n,n+1)$--position of $K_\sigma$.  
Hence  Lemma~\ref{lem:easygbbbounds} implies
\[h_{n+1} \leq 2((h_n/2+1-n) + (n+1) -1) = h_n+2.\]

On the other hand, Lemma~\ref{lem:handletoposn} shows that $h(M,\gamma_{\sigma,2(n+1)})$ is realized by a $(h_{n+1}/2-n,n+1)$--position of the associated knot $K_\sigma$.  A meridional stabilization along an arc of $K_\sigma$  yields  a $(h_{n+1}/2-n+1,n)$--position of $K_\sigma$.   
Hence  Lemma~\ref{lem:easygbbbounds}  implies
\[h_n \leq 2((h_{n+1}/2-n+1) + n -1) = h_{n+1}.\]
\end{proof}

\subsection{Handle numbers of sutured manifolds of Type I, II, III}

\begin{prop}\label{prop:handlesofgeneralguts}

Let $(G, \gamma_G)$ be homeomorphic 
to a sutured manifold of Type I, Type II, or Type III.
Then:
\begin{itemize}
    \item Type I has $h(G, \gamma_G) = 2$, 
    \item Type II  has $h(G, \gamma_G) = 2$, and
    \item Type III has $h(G, \gamma_G)$  bounded below by twice the Heegaard genus of the Dehn filling of $G$ along the slope of $\gamma_G$.
\end{itemize}

In particular, if $(G, \gamma_G)$ is a non-trivial torus knot exterior where the slope $\sigma$ of $\gamma_G$ is not the slope $\rho$ of a regular fiber, then 
$h(G, \gamma_G) = 2$ if $\Delta(\rho,\sigma)=1$ and $h(G, \gamma_G) = 4$ otherwise.
\end{prop}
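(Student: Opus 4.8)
The plan is to recognize each of the three types as a sutured manifold of the form $(M, \gamma_{\sigma, 2n})$ from the previous subsection and then to read off its handle number from the bounds already established, specializing to the two worked Examples wherever possible. So the whole argument reduces to correctly matching each type to the parameters $(M, \sigma, n)$ and then citing Lemma~\ref{lem:easygbbbounds}, Lemma~\ref{lem:handletoposn}, and Examples~\ref{exa:solidtorusmeridian} and~\ref{exa:torusknotext}.

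For Type I, I would write $(G, \gamma_G) = (M, \gamma_{\sigma, 4})$ with $M$ a solid torus and $\sigma$ the longitudinal slope, so $b = n = 2$. Since $\Delta(\sigma, \mu) = 1$, the Dehn filling $M_\sigma$ is $S^3$ and the core $K_\sigma$ is the unknot, which has a $(0,2)$-position. The sharp formula in Lemma~\ref{lem:easygbbbounds} then yields $h(G, \gamma_G) = 2(b-1) = 2$ exactly (the lower bound $2(b-1)=2$ already rules out the smaller values). For Type II, I would write $(G, \gamma_G) = (M, \gamma_{\sigma, 2})$ with $M$ a solid torus and $\Delta(\sigma, \mu) \geq 2$; this is precisely the non-longitudinal case of Example~\ref{exa:solidtorusmeridian}, giving $h(G, \gamma_G) = 2$ with no further work.

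For Type III, I would write $(G, \gamma_G) = (M, \gamma_{\sigma, 2})$ with $M$ a non-trivial knot exterior and $n = 1$, so that the $n = 1$ case of Lemma~\ref{lem:handletoposn} gives $g(M_\sigma) \leq h/2$, which is exactly the asserted lower bound $h(G, \gamma_G) \geq 2\,g(M_\sigma)$. For the final `in particular' clause, I would observe that the unique boundary slope of a torus knot exterior bounding an essential annulus is the regular fiber (cabling) slope $\rho$, so the Type III hypothesis becomes $\sigma \neq \rho$, equivalently $\Delta(\sigma,\rho)\geq 1$, and the handle number is read directly from Example~\ref{exa:torusknotext} as $2$ when $\Delta(\sigma, \rho) = 1$ and $4$ otherwise. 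I expect no serious obstacle, as the content is already contained in the earlier lemmas; the only points needing care are confirming that the longitudinal filling in Type I gives $S^3$ with an unknotted core (so that the sharp equality of Lemma~\ref{lem:easygbbbounds}, rather than merely its two-sided estimate, applies) and that the essential-annulus boundary slope coincides with $\rho$ in the torus knot case.
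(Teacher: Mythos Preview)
Your proposal is correct and matches the paper's proof almost verbatim for Types II and III and for the torus knot clause. The one difference is in Type~I: the paper invokes Lemma~\ref{lem:parallelsutures} to sandwich $h(G,\gamma_{\sigma,4})$ between $h(G,\gamma_{\sigma,2})=0$ and $h(G,\gamma_{\sigma,2})+2=2$ (noting the lower inequality is strict because four sutures is not a product), whereas you apply the sharp $(0,b)$ case of Lemma~\ref{lem:easygbbbounds} directly to the unknot in $S^3$. Both are one-line citations of already-established results; your route is marginally more self-contained since it does not require separately observing that the two-suture case is a product, while the paper's route avoids needing to check that the unknot admits a $(0,2)$-position.
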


\begin{proof}
For Type I sutured manifolds $(G,\gamma_G)$ we have that $\gamma_G= \gamma_{\sigma, 4}$ where $\sigma$ is a longitude.  
Since $h(G,\gamma_{\sigma,2n}) =0$ if and only if $(G,\gamma_{\sigma,2n})$ is a product sutured manifold, $h(G,\gamma_{\sigma,2n}) =0$ if and only if $n=1$ (because a connected product sutured manifold has connected $R_+$). 
Then, by Lemma~\ref{lem:parallelsutures}, we have
\[  0 = h(G,\gamma_{\sigma,2}) \leq h(G,\gamma_{\sigma,4}) \leq h(G,\gamma_{\sigma,2}) + 2  = 2 \]
where the first inequality is actually strict.
Thus $h(G,\gamma_{\sigma,4}) =2$.  

For Type II sutured manifolds $(G,\gamma_G)$ we have that $\gamma_G=\gamma_{\sigma, 2}$ where $\sigma$ intersects the meridian at least twice. Example~\ref{exa:solidtorusmeridian} then shows that
$h(G, \gamma_{\sigma,2}) = 2$.

 For Type III sutured manifolds $(G,\gamma_G)$  we have that $\gamma_G= \gamma_{\sigma,2}$ with $\sigma$ different from the slope of an essential annulus.  Lemma~\ref{lem:handletoposn} implies that the Heegaard genus of the Dehn filling of $G$ along $\sigma$ is at most $h(G,\gamma_{\sigma, 2})/2$.

When $G$ is a non-trivial knot exterior, the handle number calculation follows from Example~\ref{exa:torusknotext}.
\end{proof}

\subsection{Comparisons of handle number with tunnel number} 

With the bounds established in Section~\ref{sec:minimizinglinhandlenumbers}, we may make comparisons of handle numbers of a fixed $3$-manifold with torus boundary for different suture structures.




As before, let $M$ a $3$--manifold where $\bdry M$ is a torus, let $M_\sigma$ be its Dehn filling along the slope $\sigma$, and let $K_\sigma$ be the knot that is the core curve of this filling. 
Let $(M,\gamma_+)$ be the sutured manifold structure with $R_+(\gamma_+) = \bdry M$.  Thus $h(M,\gamma_+) = 2\TN(M) +  2$ where $\TN(M)$ is the tunnel number of $M$.  Note that $\TN(M) = \TN(K_\sigma)$ for any slope $\sigma$.

\begin{example}\label{ex:tn}
For any slope $\sigma$ in $\bdry M$, $g(M_\sigma) \leq \TN(K_\sigma)+1$ and $K_\sigma$ has a $(\TN(K_\sigma)+1,1)$--position in $M_\sigma$.  Thus Lemma~\ref{lem:easygbbbounds} implies that $h(M, \gamma_{\sigma,2}) \leq 2(\TN(K_\sigma)+1) = h(M,\gamma_+)$.
\end{example}

\begin{lemma}\label{lem:gbpositionsandtunnelnumber}
For each positive integer $n$, $h(M,\gamma_+) \leq h(M,\gamma_{\sigma,2n})+2$.

Moreover, $h(M,\gamma_+) = h(M,\gamma_{\sigma,2n})+2$ if and only if $K_\sigma$ has a $(\TN(K_\sigma)-n+1,n)$--position.
\end{lemma}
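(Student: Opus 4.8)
The plan is to translate the entire statement into a comparison of handle numbers with tunnel number via the identity $h(M,\gamma_+) = 2\TN(K_\sigma)+2$ recorded just before the lemma (together with $\TN(M)=\TN(K_\sigma)$). Once this is done, both the inequality and the ``moreover'' clause hinge on the single relation between $h(M,\gamma_{\sigma,2n})$ and $2\TN(K_\sigma)$: the inequality part is exactly $h(M,\gamma_{\sigma,2n}) \ge 2\TN(K_\sigma)$, and equality in the lemma corresponds to $h(M,\gamma_{\sigma,2n}) = 2\TN(K_\sigma)$.

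For the inequality $h(M,\gamma_+) \le h(M,\gamma_{\sigma,2n})+2$, I would set $h = h(M,\gamma_{\sigma,2n})$ and feed it into Lemma~\ref{lem:handletoposn}, which furnishes a $(h/2+1-n,\,n)$--position of $K_\sigma$. Performing the $n-1$ meridional stabilizations described before the lemma turns this into a $(h/2,\,1)$--position. The one geometric input I will invoke is the converse of the observation used in Example~\ref{ex:tn}: a $(g,1)$--position forces $\TN(K_\sigma)\le g$ (equivalently, the knot exterior has Heegaard genus at most $g+1$). Applying this with $g=h/2$ gives $\TN(K_\sigma)\le h/2$, and hence $h(M,\gamma_+)=2\TN(K_\sigma)+2 \le h+2$. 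I would emphasize that this computation also records the sharper inequality $h(M,\gamma_{\sigma,2n})\ge 2\TN(K_\sigma)$, which is the real engine for the equality case.

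For the ``moreover'' clause I would argue both implications from this inequality paired with its converse coming from Lemma~\ref{lem:easygbbbounds}. If $h(M,\gamma_+)=h(M,\gamma_{\sigma,2n})+2$, then $h(M,\gamma_{\sigma,2n})=2\TN(K_\sigma)$, so $h/2+1-n = \TN(K_\sigma)-n+1$ and Lemma~\ref{lem:handletoposn} directly produces the desired $(\TN(K_\sigma)-n+1,\,n)$--position. Conversely, given such a position, Lemma~\ref{lem:easygbbbounds} yields
\[
h(M,\gamma_{\sigma,2n}) \le 2\bigl((\TN(K_\sigma)-n+1)+n-1\bigr) = 2\TN(K_\sigma);
\]
combined with $h(M,\gamma_{\sigma,2n})\ge 2\TN(K_\sigma)$ from the first part, this forces $h(M,\gamma_{\sigma,2n})=2\TN(K_\sigma)=h(M,\gamma_+)-2$, which is the asserted equality.

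The main obstacle is the bridge--position--to--tunnel--number fact $(g,1)\text{-position}\Rightarrow \TN(K_\sigma)\le g$. The delicate point is that this cannot be harvested from Lemma~\ref{lem:easygbbbounds} without circularity, since that lemma bounds $h(M,\gamma_{\sigma,2b})$ rather than $h(M,\gamma_+)$; it is a genuinely separate geometric statement. I would treat it as a standard fact (the reverse of the input to Example~\ref{ex:tn}), or supply the short justification that drilling the single trivial bridge arc out of each of the two handlebodies of the $(g,1)$--position produces genuine handlebodies whose union is a genus $g+1$ Heegaard splitting of the exterior with $R_+=\bdry M$. Once this input is fixed, the remainder of the argument is bookkeeping with the two handle-number bounds and Lemma~\ref{lem:handletoposn}.
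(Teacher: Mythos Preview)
Your proposal is correct and follows essentially the same route as the paper's proof. The only cosmetic difference is that the paper invokes the general fact ``a $(g,n)$--position gives $\TN(K)\le g+n-1$'' directly, whereas you first meridionally stabilize down to a $(h/2,1)$--position and then apply the special case ``a $(g,1)$--position gives $\TN(K)\le g$''; since the general fact is itself proved by exactly those stabilizations, the two arguments are the same up to unpacking.
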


\begin{proof}
Let $h_n = h(M, \gamma_{\sigma,2n})$ for positive integers $n$, and let $h_+ = h(M,\gamma_+)$. 
By Lemma~\ref{lem:handletoposn}, $K_\sigma$ has a $(g,n)$--position with $g=h_n/2+1-n$.  
Since a $(g,n)$--position of $K$ gives the bound $\TN(K) \leq g+n-1$, we have  $h_+ = 2\TN(K) + 2 \leq 2(g+n-1)+2 = h_n +2$.

Suppose $h_+ = h_n+2$. Lemma~\ref{lem:handletoposn} implies that $K$ has a $(g,n)$--position with $g=h_n/2+1-b$. But $h_n= h_+-2=2\TN(K)$, so $g=\TN(K)-n+1$. 

Now suppose $K$ has a $(g,n)$--position with $\TN(K) = g+n-1$. 
This induces the bound $h_n \leq 2\TN(K)$.
Since $2\TN(K)+2=h_+$, 
we then obtain $h_n +2 \leq h_+$.
However, since  $h_+ \leq h_n +2$ for all $n \geq 1$, we must have $h_n +2 \leq h_+$.
\end{proof}

\begin{cor}
Let $h_+ = h(M,\gamma_+) = 2\TN(M)+2$ and let $h_1 = h(M,\gamma_{\sigma,2})$.  Then $h_1 \leq h_+ \leq h_1+2$.

Moreover $h_+ = h_1+2$ if and only if $K_\sigma$ has a $(\TN(M),1)$--position in $M_\sigma$.
\end{cor}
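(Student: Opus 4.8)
The plan is to obtain this corollary as the $n=1$ specialization of Lemma~\ref{lem:gbpositionsandtunnelnumber}, combined with the bound already recorded in Example~\ref{ex:tn}. Since $h_+ = h(M,\gamma_+) = 2\TN(M)+2$ is given in the statement, the entire content reduces to the two inequalities $h_1 \leq h_+ \leq h_1+2$ together with the equality criterion, and each of these is visible from a prior result.

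First I would establish the lower bound $h_1 \leq h_+$. This is precisely what Example~\ref{ex:tn} provides: for any slope $\sigma$ in $\bdry M$ it gives $h(M,\gamma_{\sigma,2}) \leq 2(\TN(K_\sigma)+1)$. Because $\TN(K_\sigma)=\TN(M)$ for every slope $\sigma$, the right-hand side equals $2\TN(M)+2 = h_+$, so $h_1 \leq h_+$. Next I would read off the upper bound $h_+ \leq h_1+2$ directly from Lemma~\ref{lem:gbpositionsandtunnelnumber} by setting $n=1$, which yields $h(M,\gamma_+) \leq h(M,\gamma_{\sigma,2})+2 = h_1+2$.

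For the equality clause, the same lemma with $n=1$ asserts that $h_+ = h_1+2$ if and only if $K_\sigma$ has a $(\TN(K_\sigma)-1+1,\,1)$-position, that is, a $(\TN(K_\sigma),1)$-position in $M_\sigma$. Invoking the identity $\TN(K_\sigma)=\TN(M)$ once more, this is exactly a $(\TN(M),1)$-position, which is the condition in the statement. Assembling the two inequalities gives the chain $h_1 \leq h_+ \leq h_1+2$, and the equality criterion follows verbatim from the ``moreover'' part of Lemma~\ref{lem:gbpositionsandtunnelnumber}.

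I do not expect any substantial obstacle here, as the corollary is essentially a repackaging of the $n=1$ case. The only point requiring care is the notational bookkeeping that identifies $\TN(K_\sigma)$ with $\TN(M)$, so that the $(\TN(K_\sigma),1)$-position supplied by the lemma is correctly rephrased as a $(\TN(M),1)$-position in $M_\sigma$; this is justified by the remark preceding Example~\ref{ex:tn} that the tunnel number is independent of the filling slope.
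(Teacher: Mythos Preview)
Your proposal is correct and follows exactly the paper's approach: the paper's proof simply cites Example~\ref{ex:tn} for the inequality $h_1 \leq h_+$ and Lemma~\ref{lem:gbpositionsandtunnelnumber} (with $n=1$) for both $h_+ \leq h_1+2$ and the equality criterion. Your expanded argument, including the bookkeeping $\TN(K_\sigma)=\TN(M)$, is precisely the unpacking of that one-line citation.
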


\begin{proof}
    This follows from Example~\ref{ex:tn} and Lemma~\ref{lem:gbpositionsandtunnelnumber}.
\end{proof}



\begin{example}

Let $M$ be the exterior of a knot $K$ in $S^3$ with meridian $\mu$.  As before, set $h_+ = h(M,\gamma_+) = 2\TN(K)+2$ and $h_n = h(M,\gamma_{\mu,2n})$ for positive integers $n$.

\begin{itemize}
    \item 
The unknot is the only one-bridge knot.  It has tunnel number $0$. For this we have 
\[ h_+ = 2 \quad \mbox{and} \quad
h_n =  2(n-1) \quad n \geq 1. 
\]

\item
Two-bridge knots all have tunnel number $1$.  Thus we have
\[ h_+ = 4 \quad \mbox{and} \quad
h_n = \begin{cases} 2 & n=1,2\\ 2(n-1) & n \geq 2. \end{cases}
\]

\item
Three-bridge knots have tunnel number at most $2$.  
\begin{itemize}
    \item 
    A three-bridge knot with tunnel number $1$ has a $(1,1)$--position \cite{Tunnel1Bridge3}. Thus a three-bridge knot with tunnel number $1$ has
\[ h_+ = 4 \quad \mbox{and} \quad
h_n = \begin{cases} 2 & n=1\\ 4 & n=2,3 \\ 2(n-1) & n \geq 3. \end{cases}
\]
    \item
    A three-bridge knot with tunnel number $2$ has
\[ h_+ = 6 \quad \mbox{and} \quad
h_n = \begin{cases} 4 & n=1,2,3 \\ 2(n-1) & n \geq 3. \end{cases}
\]
\end{itemize}
\end{itemize}

These knots all have $h_+ = h_1+2$.

However, there are knots $K$ in $S^3$ with $h_+=h_1$.  For example, there are knots with tunnel number $1$ but no $(1,1)$--position, e.g.\ \cite[Theorem 1.3]{bowmantaylorzupan}, and so such knots have $h_1=h_+=4$.
\end{example}

\section{Uniqueness of Seifert surfaces}



\begin{theorem}\label{theo:sutmanwithnonisolatinggutsishoriprime}
    Let $(M,\gamma)$ be a sutured manifold  such that each component of $\bdry M$ contains an annular suture. If $(M,\gamma)$ has guts $(G,\gamma_G)$ that are connected, incompressibly horizontally prime, and non-isolating, then $(M,\gamma)$ is incompressibly horizontally prime.
\end{theorem}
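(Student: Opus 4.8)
The plan is to pull a horizontal surface back through the product-disk decomposition furnished by the non-isolating hypothesis, treating the product part trivially and invoking horizontal primeness on the guts. Let $S$ be an incompressible horizontal surface in $(M,\gamma)$ in the sense of Definition~\ref{defn:horizontal}; I must show $S$ is isotopic into a collar of $R_+(\gamma)\cup R_-(\gamma)$. Because the guts are non-isolating, Lemma~\ref{lem:nonisolatingmeansproductdisks} supplies a minimal reducing product decomposition surface $\Delta$ consisting only of product disks, giving $(M,\gamma)\overset{\Delta}{\leadsto}(P,\gamma_P)\sqcup(G,\gamma_G)$ with $(P,\gamma_P)$ a product sutured manifold and $(G,\gamma_G)$ the guts.

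First I would normalize the intersection $S\cap\Delta$. Since $M$ is irreducible, $S$ is incompressible, and every component of $\Delta$ is a disk, an innermost-disk argument removes all circles of intersection, so $S\cap\Delta$ is a union of arcs. Each product disk meets $A(\gamma)$ in two spanning arcs, and since $\bdry S$ is isotopic to $\bdry R_+(\gamma)$ it crosses each spanning arc exactly once; hence, exactly as in the proof of Lemma~\ref{lem:diskdecomphandlenumber}, $S$ meets each product disk of $\Delta$ in a single horizontal arc. Cutting along $\Delta$ therefore splits $S$ into $S_G=S\cap G$ and $S_P=S\cap P$, glued along these arcs.

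Next I would verify that $S_G$ is an incompressible horizontal surface in $(G,\gamma_G)$: incompressibility is inherited from $S$ (a compression of $S_G$ in $G$ would be a compression of $S$ in $M$); the horizontal arcs of $S\cap\Delta$ complete the portion of $\bdry S$ lying in $G$ to curves in $\gamma_G$ isotopic to $\bdry R_+(\gamma_G)$; the class $[S_G]=[R_+(\gamma_G)]$ in $H_2(G,\gamma_G)$ is inherited from $[S]=[R_+(\gamma)]$ because we cut only along product disks; and condition (4) of Definition~\ref{defn:horizontal} passes to $S_G$ since cutting along disks creates no new closed components. Likewise $S_P$ is horizontal in the product $(P,\gamma_P)$. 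Since $(G,\gamma_G)$ is incompressibly horizontally prime by hypothesis, $S_G$ is isotopic into a collar of $R_\pm(\gamma_G)$, and since $(P,\gamma_P)$ is a product, $S_P$ is isotopic into a collar of $R_\pm(\gamma_P)$; connectedness of $G$ ensures these are the expected parallel copies of the horizontal boundary, with no ambiguity about which boundary component is approached.

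Finally I would reassemble. Both isotopies can be arranged to fix a neighborhood of the horizontal arcs $S\cap\Delta$, which lie on $\Delta$, the shared cut locus of $P$ and $G$; they then glue to an ambient isotopy of $(M,\gamma)$ carrying $S$ into a collar of $R_+(\gamma)\cup R_-(\gamma)$, proving $(M,\gamma)$ incompressibly horizontally prime. I expect the main obstacle to be precisely this reassembly: one must check that the horizontal-primeness isotopy of $S_G$ in $G$ and the product isotopy of $S_P$ in $P$ are genuinely compatible along $\Delta$, so that pushing $S_G$ to the horizontal boundary of $G$ does not conflict with pushing $S_P$ to a fiber of $P$ across the shared arcs. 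The hypothesis that each component of $\bdry M$ carries an annular suture, together with connectivity of $G$, is what keeps the boundary combinatorics coherent and prevents the reassembled surface from acquiring stray closed or boundary-parallel pieces.
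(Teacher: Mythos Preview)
Your proposal is correct and follows essentially the same approach as the paper: both use Lemma~\ref{lem:nonisolatingmeansproductdisks} to obtain a product-disk decomposition, normalize the horizontal surface to meet each disk in a single horizontal arc, verify that the restricted pieces are incompressible horizontal surfaces in $(G,\gamma_G)$ and $(P,\gamma_P)$, apply horizontal primeness and the product structure respectively, and then reassemble using connectedness of $G$ and the shared intersection with $\Delta$. Your anticipated obstacle in the reassembly step is exactly the point the paper handles (somewhat tersely) by observing $F_P\cap\Delta=F_G\cap\Delta=F\cap\Delta$.
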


\begin{proof}
    Since the guts are non-isolating, there exists a minimal reducing product decomposition surface $\Delta$ consisting of only product disks by Lemma~\ref{lem:nonisolatingmeansproductdisks}. 
    Now let $F$ be an incompressible horizontal surface. Then because $\bdry F$ is isotopic to the core curves of $A(\gamma)$ and $M$ is irreducible, we may isotop $F$ so that it meets each component of $\Delta$ in a horizontal arc.

The surface $\Delta$ gives the decomposition 
        \[(M,\gamma) \overset{\Delta}{\leadsto} (M',\gamma') = (P, \gamma_P) \sqcup (G, \gamma_G)\]
into a product sutured manifold $(P, \gamma_P)$ and the guts $(G, \gamma_G)$.
Here, since $\Delta$ is a collection of product disks, let us regard the decomposition of $M$ along $\Delta$ into $M'$ as the closure of $M-\Delta$ (in the path metric) where $A(\gamma')$ is the restriction of $A(\gamma)$ to ${M'}$ together with two copies of $\Delta$.  Then $(M,\gamma)$ is obtained from $(M',\gamma')$ by reidentifying the two copies of $\Delta$ in $A(\gamma')$.

    For each $Q= P, G$, set $F_Q= F \cap Q$.  
    Because $\Delta$ is a collection of product disks, we may regard $A(\gamma_Q)$ as $A(\gamma)\vert_Q$ with one or two copies of components $\Delta$ as needed.

    Any component of $F$ meeting $\Delta$ has boundary, so the closed components of $F_Q$ are closed components of $F$.  Moreover, any null-homology of closed components of $F_Q$ would pull back to a null-homology of those components of $F$.
    
    Since $F$ is incompressible, $[F]=R_+(\gamma)$, and $F$ meets each component of $\Delta$ in a single horizontal arc, the following are satisfied.  
\begin{enumerate}
    \item $F_Q$ is incompressible in $Q$,
    \item $\bdry F_Q \subset \gamma_Q$ with $\bdry F_Q$ isotopic to $\bdry R_+(\gamma_Q)$, 
   \item $[F_Q,\bdry F_Q] = [R_+(\gamma_Q), \bdry R_+(\gamma_Q)]$ in $H_2(Q,\gamma_Q)$, and
     \item no closed component of $F_Q$ is null-homologous.
\end{enumerate}
Hence $F_Q$ is an incompressible horizontal surface in $Q$. 
    (Condition (2) ensures is a decomposing surface.)

Since $G$ is incompressibly horizontally prime by assumption, 
$F_G$ is isotopic into a collar of $R(G)$. Moreover, because $G$ is connected, $F_G$ is either contained in a collar of $R_+(G)$ or $R_-(G)$, say of $R_+(G)$.  Then $F_G$ is parallel to $R_+(G)$.

   Since $F_P$ is a fiber of $P$, it is parallel to $R_+(P)$ as well.  Since $F_P \cap \Delta = F_G \cap \Delta = F \cap \Delta$, we therefore have that $F = F_P \cup F_Q$ is parallel to $R_+(M) = R_+(P) \cup R_+(G)$. Hence $F = F_P \cup F_G$ is isotopic into a collar $R_+(\gamma)$.  
   Thus $(M,\gamma)$ is incompressibly horizontally prime.
\end{proof}



\begin{cor}[Cf.\cite{kobayashi}]
\label{cor:uniqueSeifert}
    Suppose $K$ is a knot with an incompressible Seifert surface $F$ for which the complementary sutured manifold $(M_F, \gamma_F)$ has guts $(G, \gamma_G)$ that are connected, incompressibly horizontally prime, and non-isolating. Then, up to isotopy fixing $K$, $F$ is the unique incompressible Seifert surface for $K$.
\end{cor}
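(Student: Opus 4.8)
The plan is to deduce the corollary from Theorem~\ref{theo:sutmanwithnonisolatinggutsishoriprime} by identifying, up to isotopy, the incompressible Seifert surfaces of $K$ with the incompressible horizontal surfaces of the complementary sutured manifold $(M_F,\gamma_F)$. First I would record the relevant structure of $(M_F,\gamma_F)$: as the exterior of $F$ in $S^3$, its boundary is the double of $F$ along $K$, a single connected closed surface, in which $R_+(\gamma_F)$ and $R_-(\gamma_F)$ are two parallel copies of $F$ and the lone annular suture $\gamma_F$ is the annulus on $\bdry E(K)$ lying between $\bdry R_+(\gamma_F)$ and $\bdry R_-(\gamma_F)$, with core a longitude of $K$. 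In particular the unique boundary component of $M_F$ contains the annular suture $\gamma_F$, so the hypothesis of Theorem~\ref{theo:sutmanwithnonisolatinggutsishoriprime} is met. Since the guts $(G,\gamma_G)$ are assumed connected, incompressibly horizontally prime, and non-isolating, the theorem yields that $(M_F,\gamma_F)$ is itself incompressibly horizontally prime.

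Next, let $S'$ be an arbitrary incompressible Seifert surface for $K$. I would first push $\bdry S'$ off $\bdry F = K$ to a parallel longitude on $\bdry E(K)$ and then isotope $S'$ to meet $F$ minimally; since both surfaces are incompressible and $S^3$ is irreducible, all circles of $S'\cap F$ that are trivial on either surface can be removed. The aim is to reach $S'\cap F=\emptyset$, so that $S'$ becomes a properly embedded surface in $M_F$. Granting this, $S'$ is an incompressible horizontal surface in the sense of Definition~\ref{defn:horizontal}: condition (1) is its incompressibility; condition (2) holds because the pushed-off boundary can be placed in $\gamma_F$ as a longitude isotopic to the core $\bdry R_+(\gamma_F)$; condition (3) holds because $[S']=[F]=[R_+(\gamma_F)]$ in $H_2(M_F,\gamma_F)$, both being Seifert classes; and condition (4) is vacuous since $S'$ is connected with nonempty boundary. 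Incompressible horizontal primeness of $(M_F,\gamma_F)$ then forces $S'$ into a collar of $R_+(\gamma_F)\cup R_-(\gamma_F)$, and connectedness puts it into a collar of a single side, so $S'$ is isotopic to a copy of $F$. Undoing the boundary push-off, the isotopy may be taken to fix $K$, which gives the asserted uniqueness.

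The main obstacle is the disjointification in the second step: after the easy reductions, $S'\cap F$ may still contain circles essential on both surfaces, and indeed two incompressible Seifert surfaces cannot be pulled apart in general (as reflected by the non-completeness of the Kakimizu complex). I would handle this by passing to the infinite cyclic cover $\widetilde{E}\to E(K)$, where the lifts of $F$ cut $\widetilde{E}$ into fundamental domains each homeomorphic to $M_F$, hence each incompressibly horizontally prime by the first step. Taking a single lift of $S'$ in minimal position and examining its pieces in the outermost fundamental domains that it meets, I expect incompressible horizontal primeness of such a domain to force an outermost piece to be boundary parallel, allowing it to be isotoped across a lift of $F$ to reduce the number of intersection circles --- contradicting minimality unless $S'\cap F=\emptyset$. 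Making this outermost-piece analysis precise, together with the bookkeeping of which boundary circles of each piece lie on the sutures and which lie on $R_\pm$, is the delicate part of the argument.
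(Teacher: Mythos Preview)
Your first paragraph is correct and matches the paper: Theorem~\ref{theo:sutmanwithnonisolatinggutsishoriprime} applies to $(M_F,\gamma_F)$ and shows it is incompressibly horizontally prime. Your third paragraph, handling the case where $S'$ is already disjoint from $F$, is also essentially the paper's argument.

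The gap is in your disjointification step. The paper bypasses it entirely by invoking the connectivity of the Kakimizu complex \cite{KakimizuComplex}: if $K$ had an incompressible Seifert surface not isotopic to $F$, then it would have one \emph{disjoint} from $F$, and one is immediately in the situation of your third paragraph. Your proposed alternative via the infinite cyclic cover does not go through as stated. In an outermost fundamental domain $M_i\cong M_F$, the piece $T$ of the lift of $S'$ has its boundary on a lift of $F$, i.e.\ on $R_+(\gamma_F)$ or $R_-(\gamma_F)$, \emph{not} in the suture $\gamma_F$. Thus $T$ is not a horizontal surface in the sense of Definition~\ref{defn:horizontal}, and incompressible horizontal primeness of $M_F$ says nothing about it. Horizontal primeness constrains surfaces whose boundary sits in $A(\gamma)$ and is isotopic to $\bdry R_+$; it does not force boundary parallelism of incompressible surfaces with boundary on $R_\pm$.

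Showing that such an outermost piece can be pushed across a lift of $F$ is exactly the content of the Scharlemann--Thompson/Kakimizu argument for connectivity of the Kakimizu complex, and it proceeds by different means (double curve sums and Thurston-norm/Euler-characteristic considerations), not via horizontal primeness. So either cite Kakimizu connectivity, as the paper does, or recognize that the outermost argument you are sketching is an independent nontrivial result that your hypothesis on $(M_F,\gamma_F)$ does not shortcut.
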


\begin{proof}
  Since the incompressible Kakimizu complex for $K$ is connected \cite{KakimizuComplex}, if $K$ has an incompressible Seifert surface not isotopic to $F$, then it has one disjoint from $F$.  
  So let $F'$ be an incompressible Seifert surface for $K$ that is disjoint from $F$. Since $F'$ is homologous to $F$ in the exterior of $K$, it follows that $F'$ is an incompressible horizontal surface in $(M_F, \gamma_F)$.  By Proposition \ref{theo:sutmanwithnonisolatinggutsishoriprime} $(M_F, \gamma_F)$ is incompressibly horizontally prime, then $F'$ isotopic into a collar of $R(\gamma_F)$. 
  Since $F'$ is connected it must be contained in a collar of, say, $R_+(\gamma_F)$.  Because such a collar is a product and $F'$ is incompressible, it must then be parallel to $R_+(\gamma_F)$. Since $E(K)$ is recovered by gluing back the product $F\times I$ to $(M_F, \gamma_F)$ the parallelism between $R_+(\gamma_F)$ and $F'$ extends into $E(K)$ to a parallelism between $F$ and $F'$. Thus $F$ and $F'$ are equivalent Seifert surfaces.  Hence $F$ is the unique incompressible Seifert surface for $K$.
  \end{proof}

\begin{remark}
    Corollary~\ref{cor:uniqueSeifert} also follows from Kobayashi \cite{kobayashi} in a similar, but slightly different way. 
    (Indeed, one could prove a version of Theorem~\ref{theo:sutmanwithnonisolatinggutsishoriprime} showing that a sutured manifold with non-isolating APSM guts is also an APSM.) 
    If the guts of an incompressible Seifert surface are connected, then they have connected boundary and hence the property of being incompressibly horizontally prime is equivalent to being an APSM.
    See Remark~\ref{rem:kobayashi}. When furthermore the guts are non-isolating, Lemma~\ref{lem:nonisolatingmeansproductdisks} implies the decomposition may be done by product disks and so the techniques of \cite{kobayashi} yield the same result.
\end{remark}

\begin{theorem}\label{thm:IHPconverse}
    Let $(M,\gamma)$ be a connected irreducible sutured manifold with $R(\gamma)$ incompressible. Suppose $(M,\gamma)$ has guts $(G, \gamma_G)$ that are either
\begin{itemize}
    \item not connected or
    \item not incompressibly horizontally prime.
\end{itemize}
Then $(M,\gamma)$ is not incompressibly horizontally prime.
\end{theorem}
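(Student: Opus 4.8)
The plan is to prove the contrapositive of each hypothesis simultaneously by \emph{constructing} an incompressible horizontal surface in $(M,\gamma)$ that cannot be isotoped into a collar of $R(\gamma)$, thereby witnessing the failure of incompressible horizontal primeness. The key observation is that $R_+(\gamma)$ itself is always an incompressible horizontal surface, and the two hypotheses each supply a \emph{second}, genuinely different horizontal surface. First I would fix a minimal reducing product decomposition surface $\Pi$ giving $(M,\gamma) \overset{\Pi}{\leadsto} (P,\gamma_P) \sqcup (G,\gamma_G)$, with $(P,\gamma_P)$ a product and $(G,\gamma_G)$ the guts. The strategy in both cases is to produce a horizontal surface $F_G$ in the guts (or in the disconnected pieces) that is \emph{not} isotopic to $R_+(G)$ or $R_-(G)$, and then glue it back across $\Pi$ to a product fiber in $P$ to obtain a horizontal surface $F$ in $(M,\gamma)$. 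The gluing is legitimate because $\Pi$ consists of product annuli and disks, so any horizontal surface meeting $\Pi$ in vertical/horizontal arcs reassembles coherently; this is essentially the reverse of the decomposition performed in the proof of Theorem~\ref{theo:sutmanwithnonisolatinggutsishoriprime}.

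In the case where the guts are \textbf{not incompressibly horizontally prime}, by definition there is an incompressible horizontal surface $F_G$ in $(G,\gamma_G)$ that is \emph{not} isotopic into a collar of $R(\gamma_G)$. I would extend $F_G$ across each product annulus or disk of $\Pi$ by the corresponding product fibers in $(P,\gamma_P)$, matching $F_G$ along $\Pi$, to obtain a decomposing surface $F \subset M$. The verification that $F$ is an incompressible horizontal surface (conditions (1)--(4) of Definition~\ref{defn:horizontal}) runs exactly as in Theorem~\ref{theo:sutmanwithnonisolatinggutsishoriprime}, read backwards: incompressibility and the homology class are preserved under gluing product pieces along annuli, and the boundary lands correctly in $A(\gamma)$. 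The crucial point is that if $F$ \emph{were} isotopic into a collar of $R(\gamma)$, then restricting that isotopy to $G$ (cutting along $\Pi$ again) would carry $F_G$ into a collar of $R(\gamma_G)$, contradicting the choice of $F_G$. Hence $(M,\gamma)$ is not incompressibly horizontally prime.

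In the case where the guts are \textbf{not connected}, write $(G,\gamma_G) = (G_1,\gamma_1) \sqcup (G_2,\gamma_2)$ (with more components handled the same way). Here the second horizontal surface is manufactured from the decomposition itself rather than from a pre-existing one: I would take $F = R_+(\gamma_G)$ on $G_1$ together with $R_-(\gamma_G)$ on $G_2$, glued across the product part $P$ by fibers interpolating between the two sides. Because $G_1$ and $G_2$ are separate components, nothing forces the $R_+$-side and $R_-$-side choices to agree, so this mixed surface represents the same homology class as $R_+(\gamma)$ yet is not globally isotopic to either $R_+(\gamma)$ or $R_-(\gamma)$: an isotopy into a collar of $R_+(\gamma)$ would have to move the $G_2$-portion across $G_2$ from $R_-$ to $R_+$, which is obstructed because $(G_2,\gamma_2)$ is reduced (not a product) and so $R_+(G_2)$ is not parallel to $R_-(G_2)$. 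This is where connectedness of the guts is genuinely used, paralleling the remark in the introduction that essential taut horizontal surfaces force disconnected guts.

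The main obstacle I anticipate is the gluing/reassembly step and its interaction with the isotopy obstruction: one must check carefully that a horizontal surface built piece-by-piece across $\Pi$ is actually embedded, incompressible, and carries the class $[R_+(\gamma)]$ in $H_2(M,\gamma)$, and, more delicately, that a hypothetical isotopy of $F$ into a collar of $R(\gamma)$ in $(M,\gamma)$ can be made to respect $\Pi$ (after an innermost-disk/minimal-intersection argument using irreducibility of $M$ and incompressibility of $\Pi$) so that it descends to an isotopy in the guts. Establishing that descent rigorously—rather than just the forward gluing—is the technical heart of the argument, and it is exactly the converse direction to the descent used in Theorem~\ref{theo:sutmanwithnonisolatinggutsishoriprime}.
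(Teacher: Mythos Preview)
Your proposal is correct and follows essentially the same strategy as the paper: in both cases you build an incompressible horizontal surface by taking a suitable horizontal surface in the guts (the witness $F_G$ when the guts are not incompressibly horizontally prime; a mix of $R_+$ on one component and $R_-$ on another when the guts are disconnected), extend it by product fibers across $P$, and then argue it cannot lie in a collar of $R(\gamma)$.

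One small point of comparison worth noting: the technical obstacle you flag --- making a hypothetical isotopy of $F$ into a collar respect $\Pi$ via an innermost-disk argument --- is handled more simply in the paper. Rather than tracking the isotopy, the paper argues directly with the \emph{end state}: if $F \subset C$ for a collar $C = C_+ \sqcup C_-$ of $R(\gamma)$, then $C$ restricts to collars of $R_\pm$ in each of $G_1$, $G_2$, and the product pieces, so each piece of $F$ already sits in one of $C_+$ or $C_-$. Since the guts components are not products, the $R_-$-parallel piece is forced into $C_-$ and the $R_+$-parallel piece into $C_+$, whereupon a single connected product fiber joining them cannot lie in either. This avoids any need to straighten an isotopy relative to $\Pi$, and you may find it a cleaner way to close the argument than the innermost-disk route you outline.
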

\begin{proof}
Since $(M,\gamma)$ is connected, if $(M,\gamma) = (G,\gamma_G)$ then the hypotheses imply that $(M,\gamma)$ must not be incompressibly horizontally prime.  So assume $(M,\gamma) \neq (G,\gamma_G)$.  Thus there is a non-empty collection of product annuli $\Pi$ that decomposes $(M,\gamma)$ into $(M_\Pi, \gamma_\Pi)$ which is a union of a product sutured manifold $(P, \gamma_P)$ and its guts $(G, \gamma_G)$. 

\medskip

(1) Assume $(G, \gamma_G)$ is not connected.  
Then $(G, \gamma_G)$ has at least two components.
Let $(G_1, \gamma_1)$ be one component and let $(G_2, \gamma_2)$ be the rest.  Then $(G_1, \gamma_1)$ and $(G_2, \gamma_2)$ are joined by either a component $(P_0,\gamma_0)$ of the product $(P, \gamma_P)$ or a product annulus $\Pi_0$. In the latter case, we may take a parallel copy of the product annulus $\Pi_0$ to cut out a product solid torus which we treat as a component $(P_0,\gamma_0)$ that joins $(G_1, \gamma_1)$ and $(G_2, \gamma_2)$. In particular, there are annuli $\Pi_1$ and $\Pi_2$ of $\Pi$ along which $P_0$ meets $G_1$ and $G_2$ respectively. (In that latter case, $\Pi_1$ and $\Pi_2$ are the two copies of $\Pi_0$.)

Let $S_1$ be a parallel push-off of $R_-(\gamma_1)$  in $G_1$ and let $S_2$ be a parallel push-off of $R_+(\gamma_2)$ in $G_2$.  In the product $P$ take a surface $S_P$ which is parallel to $R_+(\gamma_P)$, so that after regluing along the annuli $\Pi$, a surface $S$ is obtained by identifying $S_1, S_2$ and $S_P$ along their boundary components in the annuli corresponding to the product annuli $\Pi$. Any component of $\bdry S_1$, $\bdry S_2$, or $\bdry S_P$ that does not meet $\Pi$ is a core curve of the annular suture that contains it.  These remaining curves are $\bdry S$ and these annular sutures are $\gamma$.  As $S_2$ and $S_P$ are parallel to $R_+(\gamma_2)$ and $R_+(\gamma_P)$ respectively, while 
$S_1$ is parallel to $R_-(\gamma_1)$ which is homologous to $R_+(\gamma_1)$, we have that $S$ is homologous to $R_+(\gamma)$.
 
 If $S$ were compressible then there would exist a compressing disk that is disjoint from the product annuli $\Pi$, because $\Pi$ is incompressible.  Hence either $S_1, S_2$, or $S_P$ would be compressible in $G_1$, $G_2$, or $P$ respectively.  But these surfaces are incompressible by construction.

Observe that any connected component  $(M', \gamma')$ of $(M_\Pi,\gamma_\Pi)$ has non-empty sutures, and hence each $R_+(\gamma')$ and $R_-(\gamma')$ are nonempty.  Then, since $M'$ is connected, any component of $R(\gamma')$ intersects a vertical arc (an arc joining $R_+$ and $R_-$) once.  Now by construction, each of $S_1$, $S_2$, and the $S_{P}$ are isotopic to $R_+$ or $R_-$ of their sutured manifolds.  Thus each component of $S_1$, $S_2$, and $S_{P}$ is intersected exactly once by some vertical arc of its sutured manifold.
Therefore, if $S'$ is a closed subsurface of $S$, it too is intersected exactly once by some vertical arc of $(M,\gamma)$.  Thus $S'$ cannot be null-homologous in $H_2(M,\gamma)$.

Hence, $S$ is an incompressible horizontal surface for $(M,\gamma)$.

 By construction $S$ is not contained in any collar of $R(\gamma)$.
  If it were, let $C$ be such a collar that contains $S$.  Then $C$ is a disjoint union of a collar $C_+$ of $R_+(\gamma)$ and $C_-$ of $R_-(\gamma)$.  Observe that $C_+$ and $C_-$ restrict to collars of 
   each $R_\pm(\gamma_1)$, $R_\pm(\gamma_2)$, and $R_\pm(\gamma_{P_0})$.   By construction, $S_1$ must be contained in $C_+ \cap G_1$ while $S_2$ must be contained in $C_- \cap G_2$ since otherwise $G_1$ or $G_2$ would not be a component of the guts. Thus $S \cap \Pi_1$ is contained in $C_+$ while $S \cap \Pi_2$ is contained in $C_-$.   Since $S_{P_0}$ is connected and meets $S_{P_0} \cap \Pi_i = S \cap \Pi_i$, it cannot be contained in either $C_+$ or $C_-$, contrary to $S$ being contained in $C$.

 
 \medskip

(2) Assume $(G, \gamma_G)$ is not incompressibly horizontally prime.
Let $S_G$ be an incompressible horizontal surface in the guts $G$ which is not contained in a collar of $R(\gamma_G)$. 
 In the product $P$ take a surface $S_P$ which is parallel to  $R_+(\gamma_{P})$. Then after regluing along the traces of the product annuli $\Pi$, a  surface $S$ is obtained by identifying $S_G$ and $S_P$ along the boundaries corresponding to the product decomposition $\Pi$. By a similar argument as above we conclude that $S$ is an incompressible horizontal surface and not contained in a collar of $R(\gamma)$.
\end{proof}

\begin{cor}\label{cor:gutsNOTconnORNOTihp}
   Suppose $K$ is a knot in $S^3$ with an incompressible Seifert surface $F$. 
   If $F$ has guts that are either not connected or not incompressibly horizontally prime, then $K$ has  another incompressible Seifert surface not isotopic to $F$.
\end{cor}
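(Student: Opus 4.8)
The plan is to recognize this corollary as the Seifert-surface restatement of Theorem~\ref{thm:IHPconverse}, obtained by specializing to the complementary sutured manifold $(M_F,\gamma_F)$ and then running the correspondence of Corollary~\ref{cor:uniqueSeifert} in reverse. First I would verify that $(M_F,\gamma_F)$ satisfies the hypotheses of Theorem~\ref{thm:IHPconverse}. The exterior $E(K)$ is connected and irreducible, and since $F$ is a Seifert surface it is non-separating (a meridian of $K$ meets it once), so cutting $E(K)$ along $F$ yields a connected irreducible sutured manifold $(M_F,\gamma_F)$ whose $R_+(\gamma_F)$ and $R_-(\gamma_F)$ are two copies of $F$, hence incompressible by hypothesis. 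Its vertical boundary $A(\gamma_F)$ is the single annulus obtained by cutting $\bdry N(K)$ along $\bdry F$, with core the Seifert longitude. With these hypotheses in place, the assumption that the guts of $F$ are either not connected or not incompressibly horizontally prime lets Theorem~\ref{thm:IHPconverse} conclude that $(M_F,\gamma_F)$ is itself not incompressibly horizontally prime. Thus there is an incompressible horizontal surface $S$ in $(M_F,\gamma_F)$ not isotopic into a collar of $R(\gamma_F)$, and $S$ is the surface I would promote to a new Seifert surface.

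Next I would reglue the product $F\times I$ to recover $E(K)$, exactly as in Corollary~\ref{cor:uniqueSeifert} but run backwards. Since $\bdry S$ is isotopic to $\bdry R_+(\gamma_F)$, which is the single longitude, $S$ becomes a properly embedded surface $F'$ in $E(K)$ with $\bdry F' = K$ and $[F']=[F]$. Here I would check that $F'$ has no closed components: any closed component would, by condition~(4) of Definition~\ref{defn:horizontal}, be met once by a vertical arc of $(M_F,\gamma_F)$, and closing that arc up through $F\times I$ would produce a loop in $E(K)$ meeting the component once, making it non-separating and so contradicting $H_2(E(K))=0$. A surface with a single boundary curve and no closed components is connected, so $F'$ is a connected incompressible Seifert surface for $K$.

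The hard part will be the final step: showing $F'$ is not isotopic to $F$. My plan is to argue the contrapositive of the regluing dictionary. The forward direction of Corollary~\ref{cor:uniqueSeifert} already shows that a horizontal surface isotopic into a collar of $R(\gamma_F)$ becomes a Seifert surface isotopic to $F$; what I need is the reverse implication, namely that if $F'$ were isotopic to $F$ then $S$ would have to be isotopic into a collar of $R(\gamma_F)$, contradicting the choice of $S$. Making this precise is where the care is needed, since one must control the isotopy as it crosses the reglued product region $F\times I$ rather than merely inside $M_F$; I would cut the parallelism between $F'$ and $F$ back down along $F\times I$, reversing the extension argument in Corollary~\ref{cor:uniqueSeifert}, to land the isotopy of $S$ inside a collar of $R(\gamma_F)$. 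Establishing that ``not isotopic into a collar of $R(\gamma_F)$'' for $S$ corresponds exactly, in both directions, to ``not isotopic to $F$'' for $F'$ is the crux of the argument.
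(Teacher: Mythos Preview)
Your approach is essentially the paper's: apply Theorem~\ref{thm:IHPconverse} to $(M_F,\gamma_F)$, extract an incompressible horizontal surface not in a collar of $R(\gamma_F)$, rule out closed components, and argue the resulting Seifert surface is not isotopic to $F$.

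Two remarks. First, your justification for no closed components misreads condition~(4) of Definition~\ref{defn:horizontal}: that condition says no collection of closed components is null-homologous, it does not hand you a vertical arc meeting a component once. The paper instead observes that any closed surface in $M_F\subset S^3$ is null-homologous (since $H_2(M_F)=0$), which directly contradicts condition~(4); your argument is easily repaired along these lines.

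Second, and more importantly, the step you correctly flag as the crux---showing that $F'$ isotopic to $F$ in $E(K)$ would force $S$ into a collar of $R(\gamma_F)$---is left as a plan rather than an argument. The paper dispatches this cleanly by invoking \cite[Lemma~3.9]{CTPforKnots}: disjoint isotopic incompressible Seifert surfaces for a knot cobound a product region in $E(K)$, and that product region, intersected with $M_F$, is exactly a collar of $R_+(\gamma_F)$ or $R_-(\gamma_F)$ containing $F'$. Your proposed ``cut the parallelism back along $F\times I$'' is morally this, but without the cited lemma you would have to prove it from scratch, which is genuine work (an ambient isotopy of $E(K)$ carrying $F'$ to $F$ need not respect the splitting into $M_F$ and $F\times I$). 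Citing the lemma is what makes the step short.
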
 

\begin{proof}
  By Theorem~\ref{thm:IHPconverse}, the  sutured manifold $(M_F, \gamma_F)$ complementary to $F$ is not incompressibly horizontally prime.  Therefore there is an incompressible horizontal surface $F'$ in $(M_F, \gamma_F)$ homologous to $R_+(\gamma_F)$ that is not contained in a collar of $R(\gamma_F)$.
  Since $\gamma_F$ is a single annulus and $F'$ is homologous to $R_+(\gamma_F)$, it follows that upon the inclusion of $M_F$ into the exterior of $K$,  $F'$ is a union of a Seifert surface for $K$ and a collection of closed surfaces disjoint from $F$.  However, since any closed embedded surface in $S^3$ is separating, these closed surfaces would be null-homologous in $M_F$.  Therefore, since $F'$ is an incompressible horizontal surface, it cannot have any closed componets.  Hence $F'$ is a Seifert surface for $K$ disjoint from $F$.

  If $F'$ were isotopic to $F$, then they would cobound a product region in the exterior of $K$ by \cite[Lemma 3.9]{CTPforKnots}.  However that would mean that $F'$ is contained in a collar of $F$ in $S^3$, and moreover that $F'$ is contained in a collar of $R(\gamma_F)$ in $M_F$, contrary to assumption.
\end{proof}

\section{Examples of nearly fibered knots with non-unique incompressible Seifert surfaces}\label{sec:exampleofnearlyfibredknotwithtypeI}

We exhibit examples of knots in $S^3$ with isolating guts of Type I. As described in Theorem~\ref{thm:gutstructure}, such guts separate off a component of the product piece with a single connected suture. Knots with such type of guts are nearly fibered and, as we shall observe, have non-unique incompressible Seifert surfaces.

\subsection{Construction of an incompressible Seifert surface}\label{sec:constructionofincompressibleSS}
Let $K$ be a knot in $S^3$ with isolating guts of Type I.  Then $K$ is nearly fibered and therefore has a unique minimal genus Seifert surface $F$. Let $(M,\gamma)$ be the complementary sutured manifold to $F$ with product decomposing surface $\Pi$ decomposing $(M,\gamma)$ into the product manifold $(P, \gamma_P)$ and the guts $(G, \gamma_G)$.
As the guts have Type I, the decomposing product surface consists of four annuli  $\Pi= \{\Pi_0, \Pi_1, \Pi_2, \Pi_3 \}$, inducing the annular sutures $\{\gamma_0, \gamma_1, \gamma_2, \gamma_3\}$ of $G$.
As the guts are isolating, the product manifold $P$ consists of two components, say $F_2 \times [-1,1]$ whose only suture is induced from $\Pi_2$ and $F_{013}\times [-1,1]$ with sutures $\gamma$ and three induced from the product annuli $\Pi_0, \Pi_1, \Pi_3$. See Figure \ref{fig:isolatingonefour1} (Left). 

The surface $F_{013}\times \{ 0\}$  may be joined to the three surfaces 
$F_2 \times \{-\tfrac12\}$, $F_2 \times \{0\}$, and $F_2 \times \{\tfrac12\}$
by annuli $A_{23}$, $A_{12}$ and $A_{02}$ in $G$ (where $A_{ij}$ is an annulus in $G$ running between $\gamma_i$ and $\gamma_j$).  Together, these form a Seifert surface $S$ for $K$ as illustrated in Figure \ref{fig:isolatingonefour1} (Right).  We now show that $S$ has greater genus than $F$ and it is incompressible.

Observe that $\chi(S)= \chi(F_{013})+3\chi(F_2)$, while $\chi(F)=\chi(F_{013})+\chi(F_2)$. Since $F_2$ has a single boundary component, $\chi(F_2) \neq 0$.  Indeed, since $F_2$ cannot be a disk, we have $\chi(F_2) \leq -1$ and $\chi(S) \leq \chi(F)-2$.
Thus $g(S) > g(F)$.

\begin{claim}\label{claim:incompsfce}
The Seifert surface $S$ is incompressible.
\end{claim}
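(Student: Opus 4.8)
The plan is to show that $S$ is incompressible in the exterior $E(K)$ by first ruling out a compressing disk inside the complementary sutured manifold $(M,\gamma)$ and then upgrading to $E(K)$. Since $K$ is nearly fibered, its minimal genus surface $F$ is incompressible, so $R(\gamma) = R_+(\gamma)\cup R_-(\gamma)$ (the two copies of $F$) is incompressible and $M$ is irreducible (being $S^3$ cut along the incompressible $F$). Recall that $E(K)$ is recovered by regluing the product $F\times I$ to $(M,\gamma)$ along $R(\gamma)$. Given a compressing disk $D$ for $S$ in $E(K)$, with $c=\bdry D$ essential in $S$ and $\Int D\cap S=\emptyset$, I would first make $\vert D\cap R(\gamma)\vert$ minimal: an innermost circle of $D\cap R(\gamma)$ bounds a subdisk of $D$ whose boundary, lying on the incompressible $R(\gamma)$, also bounds a disk in $R(\gamma)$; irreducibility then lets me isotope this subdisk across $R(\gamma)$ and remove the circle. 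Since $\bdry D\subset S\subset\Int M$, after removing all such circles $D$ lies in $M$. Hence it suffices to rule out a compressing disk $D\subset M$.

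So suppose $D\subset M$ with $c=\bdry D$ essential in $S$. The product annuli $\Pi=\Pi_0\cup\Pi_1\cup\Pi_2\cup\Pi_3$ are incompressible and decompose $M$ into the product $P$ and the guts $G$. I would isotope $D$ to minimize $\vert D\cap\Pi\vert$. As before, innermost circles of $D\cap\Pi$ are removed using that $\Pi$ is incompressible and $M$ is irreducible, leaving only arcs of intersection.

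The crux is the outermost-arc analysis, which I expect to be the main obstacle. Let $a$ be an arc of $D\cap\Pi$ outermost on $D$, cutting off a subdisk $D'$ with $\bdry D' = \alpha\cup\beta$, where $\alpha\subset\Pi$, $\beta\subset c\subset S$, and $D'$ lies in a single component $Q$ of $M\cut\Pi$ (so $Q$ is a component of $P$ or is $G$). Since $\alpha$ is connected it lies in a single annulus $\Pi_k$, so both endpoints of $\beta$ lie on the one boundary circle of $S\cap Q$ that meets $\Pi_k$. If $Q=G$, then $\beta$ is an arc in one of the annuli $A_{ij}=S\cap G$ with both endpoints on a single boundary circle of that annulus, so $\beta$ is $\bdry$-parallel in $A_{ij}$; the bigon it cuts off together with $D'$ lets me reduce $\vert D\cap\Pi\vert$. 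If $Q$ is a component of $P$, then $\beta$ lies in a product fiber $S\cap Q$; because the vertical annuli deformation retract onto the fiber's boundary, $\alpha$ is homotopic rel endpoints into $\bdry(\text{fiber})$, and since the fiber is $\pi_1$-injective in the product, $\beta$ is again $\bdry$-parallel in the fiber, allowing a reduction. In every case $\vert D\cap\Pi\vert$ drops, contradicting minimality; hence $D\cap\Pi=\emptyset$.

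Finally, $D$ then lies in a single component $Q$ of $M\cut\Pi$ and $c=\bdry D$ lies in a single component of $S\cap Q$: either a fiber (when $Q\subset P$) or one of the annuli $A_{ij}$ (when $Q=G$). Fibers are incompressible in the product $P$, and each $A_{ij}$ is incompressible in the solid torus $G$ since its core is a longitude, nontrivial in $\pi_1 G$. In either case $c$ bounds a disk in this sub-piece of $S$, hence bounds a disk in $S$, contradicting that $c$ is essential in $S$. Thus $S$ has no compressing disk in $M$, and by the first paragraph none in $E(K)$, so $S$ is incompressible. The delicate point throughout is the $\bdry$-behavior of the pieces relative to $\Pi$: the argument hinges on the product fibers being $\bdry$-incompressible with respect to the vertical boundary and on the combinatorial fact that a single $\Pi_k$ meets each annulus $A_{ij}$ in only one boundary circle, which forces the outermost arcs to be $\bdry$-parallel.
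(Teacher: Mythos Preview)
Your proof is correct and follows essentially the same approach as the paper: push the putative compressing disk into $M$ using incompressibility of $F$, minimize its intersection with $\Pi$, eliminate circles via irreducibility and incompressibility of $\Pi$, eliminate arcs, and finish using that the pieces of $S$ in $P$ and $G$ are incompressible. The only difference is in the arc step: the paper observes directly that each arc of $D\cap\Pi$ cobounds a bigon in $\Pi$ with an arc of $S$ and surgers along an innermost such bigon, whereas you take the outermost arc on $D$ and argue that $\beta$ is $\bdry$-parallel in its piece of $S$---a minor variant of the same standard argument.
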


\begin{proof}
Suppose $S$ is compressible.
By construction, $S$ is disjoint from $F$.  As $F$ is incompressible, there would be a compressing disk for $S$ that is also disjoint from $F$.  Thus $S$ and some compressing disk are contained in $(M,\gamma)$.  Among such disks transverse to $\Pi$, let $D$ be one that minimizes $|D \cap \Pi|$.  Since  $\Pi$ is incompressible any closed curve of intersection with $D$ must bound a disk in $\Pi$.  However, chopping $D$ along an innermost such disk would yield a compressing disk intersecting $\Pi$ fewer times.   Thus $D$ and $\Pi$ must intersect only in arcs that are properly embedded in $D$ while in $\Pi$ they have their endpoints on the essential curves of $S \cap \Pi$.  Each of these arcs in $\Pi$ bounds a disk in $\Pi$ with an arc of $S$.  Chopping $D$ along an innermost one of these disks will also yield a compressing disk intersecting $\Pi$ fewer times.  Hence $D$ and $\Pi$ must be disjoint.  
However, by construction, the product decomposition surface $\Pi$ chops $S$ into incompressible pieces,  level surfaces of components of the product $P$ and longitudinal annuli of the solid torus $G$.
So a compressing disk for $S$ in $M$ must non-trivially intersect $\Pi$, a contradiction.
\end{proof}

\begin{figure}
    \centering
    \includegraphics[width = .9\textwidth]{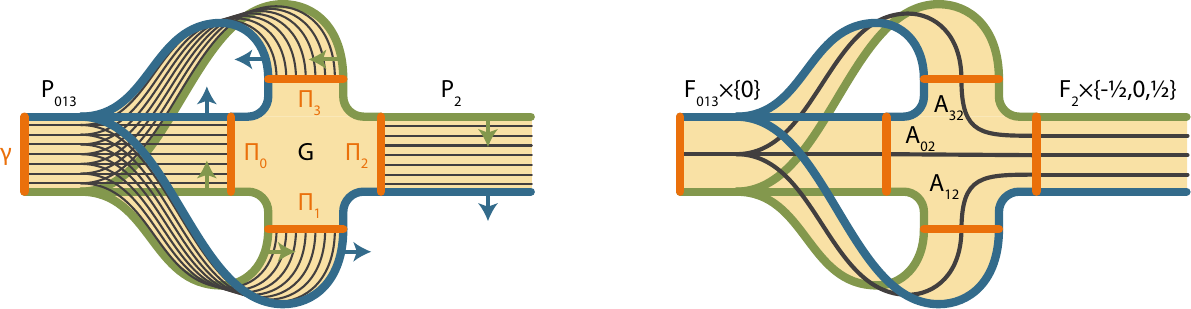}
    \caption{Left: The general structure of a sutured manifold with Type I isolating guts.   Right: The assemblage of an incompressible horizontal surface using one fiber of the non-isolated product part, three annuli in the guts, and three fibers of the isolated product part.}
    \label{fig:isolatingonefour1}
\end{figure}

\subsection{Explicit examples of knots with isolating guts of Type I}
Here we describe a construction of nearly fibered knots with isolating guts of Type I for which  $F_{013}$, the fiber of the non-isolated component of the product piece described above, is a four punctured sphere.

Let $J$ be a knot in $S^3$ with a Seifert surface $F_J$ whose complementary sutured manifold has a handle number $2$ Heegaard surface $S_J$. (So $J$ is either a fibered knot or a handle number $2$ knot.) Let $(M, \gamma)$ be the sutured manifold complementary to $S_J$.  Thus $(M, \gamma)$ is homeomorphic to the sutured manifold $(F_J \times I, \bdry F_J \times I)$ with a $1$-handle $H_+$ attached to  $F_J \times \{+1\}$ and another (dual) $1$-handle $H_-$ attached to $F_J \times \{-1\}$. Let $D_+$ and $D_-$ cocores of these two handles. Of course, as a submanifold of $S^3$, $(M, \gamma)$ is also the exterior of $(S_J \times I, \bdry S_J \times I)$.

Let $(M', \gamma')$ be obtained from $(M, \gamma) \subset S^3$ as follows.  In each $R_+(\gamma)$ and $R_-(\gamma)$ respectively, choose a properly embedded arc $a_+$ and $a_-$ that runs over its handle $H_+$ or $H_-$ (that is, crosses $D_+$ or $D_-$) exactly once.  In the annular suture $\gamma$, attach  two trivial $1$-handles $h_+$ and $h_-$ to $M$ to form the submanifold $M'$ of $S^3$.  Letting $d_+$ and $d_-$ be cocores of these handles, they are trivial in that there are dual disks $e_+$ and $e_-$ in the exterior of $M'$ so that $\bdry e_+$ and $\bdry e_-$ each meet $\gamma$ in one arc and crosses $d_+$ or $d_-$ once. (Indeed $\bdry M$ is a Heegaard surface for $S^3$ and $\bdry M'$ is the result of two Heegaard stabilizations along $\gamma$.)  Now form the core of the suture $\gamma'$ in $\bdry M'$ from two parallel copies of each $a_+$ and $a_-$ and arcs in $\gamma \cup \bdry h_+ \cup \bdry h_-$ as indicated in Figure~\ref{fig:Examplesfce}.  This produces the sutured manifold $(M', \gamma')$ as a submanifold of $S^3$ whose exterior is the complementary sutured manifold $(M'', \gamma'')$.

\begin{figure}
    \centering
    \includegraphics[width=.9\textwidth]{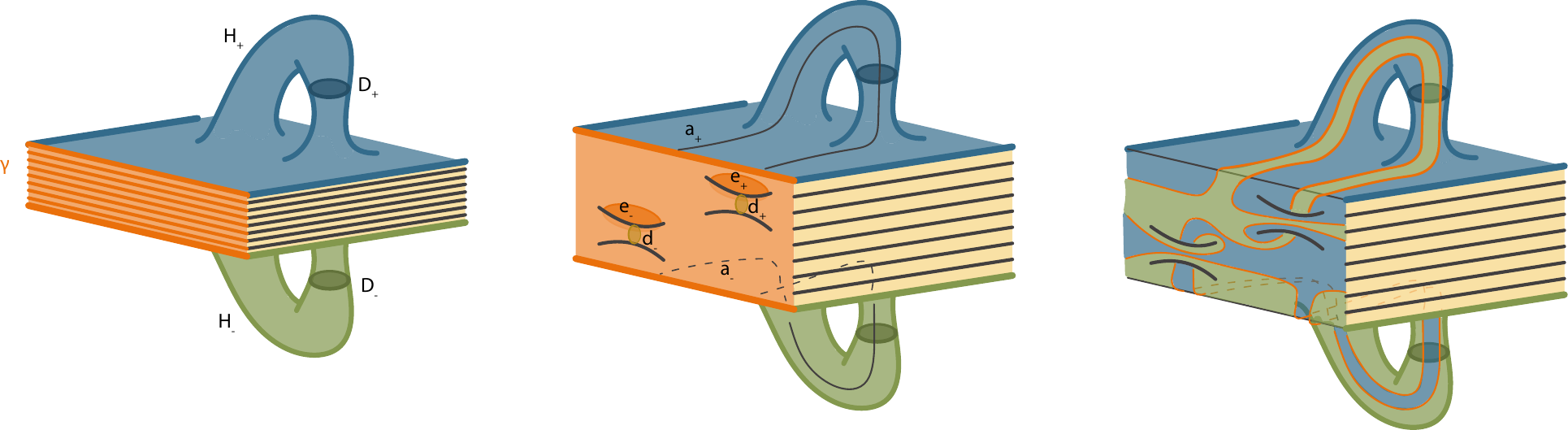}  
    \caption{Left: The exterior of a Heegaard surface giving a handle number $2$ presentation of a Seifert surface.  Handles $H_+, H_-$ with compressing disks $D_+, D_-$ are shown. Center: Arcs $a_+,a_-$ are chosen running over each of the handles once.  Two trivial handles with their pairs of compressing disks are added along the annular suture.  Right:  A new suture is placed on this manifold following the arcs $a_+, a_-$.}
    \label{fig:Examplesfce}
\end{figure}
Observe that the four disks $D_+$, $D_-$, $d_+$, and $d_-$ are product disks in $(M',\gamma')$. Decomposing $(M',\gamma')$ along those disks leaves the product sutured manifold $(F_J \times I, \bdry F_J \times I)$. 

The two disks $e_+$ and $e_-$ are product disks for the complementary sutured manifold $(M'', \gamma'')$. Decomposing $(M'', \gamma'')$ along them leaves the sutured manifold $(S_J \times I, \bdry S_J \times ([-1, -1/2] \cup [-1/4,1/4] \cup [1/2,1]))$, the product sutured manifold but with three parallel copies of its suture.  A product annulus now decomposes this into the product $(S_J \times I, \bdry S_J \times I)$ and a solid torus with four longitudinal sutures.

Then  $K= \bdry R_+(\gamma')$ is a knot in $S^3$ with Seifert surface $R_+(\gamma')$ whose complementary sutured manifold $(M'', \gamma'')$ has isolating guts of Type I.  In particular, $K$ is a nearly fibered knot, $R_+(\gamma')$ is its minimal genus Seifert surface, and it has another incompressible Seifert surface.

In Figure~\ref{fig:exampleunknot}, we apply this construction where $J$ is the unknot with a handle number $2$ Heegaard surface $S_J$ of genus $1$ for the splitting of the complement of the disk $F_J$.  Shown is the sutured manifold $(M',\gamma')$.  This knot $K$ is the core of the suture $\gamma'$ and has genus $2$. Using SnapPy \cite{snappy} with Sage \cite{sagemath}, we find that $K$ turns out to be a 16 crossing hyperbolic knot 
with 
Jones polynomial
\[-q^{-16}+q^{-14}+q^{-8}-q^{-6}+q^{-4}-q^{-2}+1-q^4+q^6\]
and Knot Floer homology ranks

\[
\begin{tabular}{r|rrrrrr}
    2  &    &    &   &   & 1 & 1\\
    1  &    &    &   & 2 & 2\\
    0  &    &    & 3 & 2 \\
    -1 &    &  2 & 2 \\
    -2 &  1 &  1 \\      
    \hline
       & -2 & -1 & 0 & 1 & 2 & 3
\end{tabular}
\]
where the horizontal \& vertical coordinates are the Maslov grading \& Alexander grading.
A referee pointed out that this is the knot 16n332130; we confirmed this with SnapPy using the arc index 11 presentation of 16n332130 found in \cite{arcindex11}.


Its incompressible Seifert surface $S$ constructed as in Section~\ref{sec:constructionofincompressibleSS} has a genus $4$.
\begin{figure}
    \centering
    \includegraphics[height=4cm]{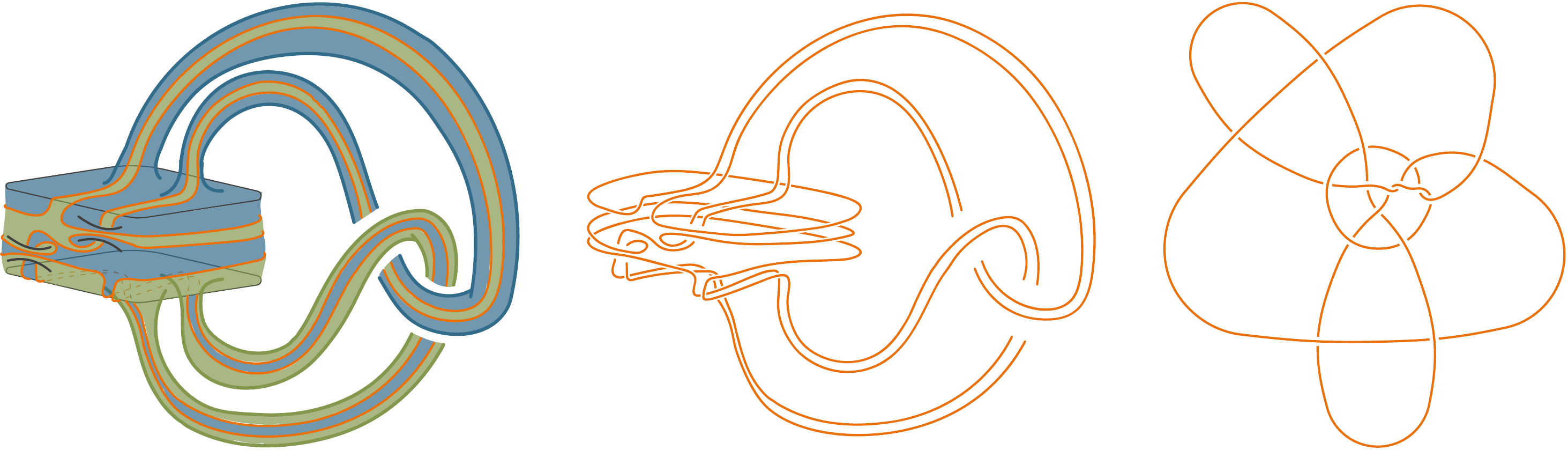}
    \caption{Left: A nearly fibered knot $K$ with isolating Type I guts is built from a handle number $2$ presentation of the unknot $J$. Center: $K$ again without the surfaces. Right: $K$ simplified to a $16$ crossing knot, as drawn in KLO \cite{KLO}.}
    \label{fig:exampleunknot}
\end{figure}




\bibliographystyle{alpha}
\bibliography{biblio}

\end{document}